\theoremstyle{plain}
\newtheorem{thm}{Theorem}[section]
\newtheorem{lem}[thm]{Lemma}
\theoremstyle{definition}
\newtheorem{exa}[thm]{Example}
\newtheorem{rem}[thm]{Remark}
\newtheorem{cor}[thm]{Corollary}
\numberwithin{equation}{section}
\newcommand{\R}{\mathbb{R}}
\newcommand{\bR}{{\mathbb R}}
\newcommand{\bN}{{\mathbb N}}
\newcommand{\cK}{{\mathcal K}}
\renewcommand{\a}{\alpha}
\renewcommand{\b}{\beta}
\renewcommand{\c}{\gamma}
\renewcommand{\l}{\lambda}
\newcommand{\e}{\epsilon}
\newcommand{\ol}{\overline}
\renewcommand{\d}{\delta}
\renewcommand{\(}{\left(}
\renewcommand{\)}{\right)}
\newcommand{\lils}[1]{\stackrel[{#1}]{}{\overline\lim}}
\newcommand{\lili}[1]{\stackrel[{#1}]{}{\underline\lim}}
\newcommand{\til}{\tilde}
\DeclareMathOperator{\Id}{Id}
\DeclareMathOperator{\Lip}{Lip}
\DeclareMathOperator{\LB}{LB}
\DeclareMathOperator*{\essinf}{ess\ inf}
\DeclareMathOperator*{\esssup}{ess\ sup}
\begin{document}

\title[Nonlinear perturbed integral equations and nonlocal BVPs]{Nonlinear perturbed integral equations related to nonlocal boundary value problems}
\date{}

\subjclass[2010]{Primary 45G10, secondary  34A34, 34B10, 34B15, 34B18, 34K10}%
\keywords{Perturbed integral equation, nonlocal differential equation, nonlinear boundary condition, nontrivial solution, fixed point index, cone.}%

\author[A. Cabada]{Alberto Cabada}%
\address{Alberto Cabada, Departamento de An\'alise Ma\-te\-m\'a\-ti\-ca, Facultade de Matem\'aticas,
Universidade de Santiago de Com\-pos\-te\-la, 15782 Santiago de Compostela, Spain}%
\email{alberto@cabada.usc.es}%

\author[G. Infante]{Gennaro Infante}
\address{Gennaro Infante, Dipartimento di Matematica e Informatica, Universit\`{a} della
Calabria, 87036 Arcavacata di Rende, Cosenza, Italy}%
\email{gennaro.infante@unical.it}%

\author[F. A. F. Tojo]{F. Adri\'an F. Tojo}
\address{F. Adri\'an F. Tojo, Departamento de An\'alise Ma\-te\-m\'a\-ti\-ca, Facultade de Matem\'aticas,
Universidade de Santiago de Com\-pos\-te\-la, 15782 Santiago de Compostela, Spain}%
\email{fernandoadrian.fernandez@usc.es}%

\begin{abstract}
By topological arguments, we prove new results on the existence, non-existence, localization and multiplicity of nontrivial solutions of a class of perturbed nonlinear integral equations. These type of integral equations arise, for example, when dealing with boundary value problems where nonlocal terms occur in the differential equation and/or in the boundary conditions. Some examples are given to illustrate the theoretical results.
\end{abstract}

\maketitle

\section{Introduction}
Infante and Webb~\cite{gijwems}, by means of classical fixed point index theory, studied the existence of multiple~\emph{nontrivial} solutions of perturbed integral equations of the type 
\begin{equation}\label{pham-gijw}
u(t)=\c(t)\hat{{\alpha}}[u]+\int_0^1 k(t,s)f(s,u(s))\,ds,\end{equation}
where $\gamma$ is a continuous function, allowed to change sign, and $\hat{{\alpha}}[\cdot]$ is an~\emph{affine} functional given by a Stieltjes integral with a \emph{positive} measure, namely
\begin{equation*}
%\label{afbcs}
\hat{{\alpha}}[u]=A_0+\int_0^1 u(s)\,dA (s).
\end{equation*}
Equation~\eqref{pham-gijw} can be used to study some nonlocal boundary value problems (BVPs) occurring when modelling the steady-state of a heated bar of length one subject to a thermostat, where a controller in one end adds or removes heat accordingly to the temperature measured by sensor at a point of the bar. This type of heat-flow problem was motivated by earlier work by Guidotti and Merino~\cite{guimer} and has been investigated by a number of authors --we refer the reader to the recent papers \cite{gi-pp-ft, jw-narwa} and references therein.

The approach of \cite{gijwems} was modified by Cabada and co-authors \cite{ac-gi-at-tmna} in order to deal with the case of integral equations with a deviated argument, namely
\begin{equation}\label{pham-cit}
u(t)=\c(t){\alpha}[u]
+\int_{0}^{1}k(t,s)g(s)f(s,u(s),u(\sigma(s)))ds,
\end{equation}
where
$\sigma$ is a continuous function such that $\sigma([0,1])\subseteq [0,1]$ and $\alpha[\cdot]$ is a \emph{linear} functional, given by a Stieltjes integral, that is
\begin{equation}\label{eqst}
\alpha [u]=\int_0^1 u(s)\,dA (s),
\end{equation}
with a \emph{signed} measure, in the spirit of the paper by Webb and Infante \cite{jwgi-nodea-08}.
The results of \cite{ac-gi-at-tmna} cover the interesting case of differential equations with reflections and, in particular were applied to the study of the BVP
\begin{equation}\label{dde}
u''(t)+g(t)f(t,u(t),u(\sigma(t)))=0,\ t \in (0,1),
\end{equation}
\begin{equation}\label{ddebcs}
u'(0)+{\alpha}[u]=0,\; \beta u'(1) + u(\eta)=0,\; {\eta}\in [0,1].
\end{equation}
The BVP \eqref{dde}-\eqref{ddebcs}
arises when studying the steady states of a model of a light bulb with a temperature regulating system that includes a feedback controller. One assumption made in this thermostat model is that the feedback controller has a \emph{linear} response; for more details see Section 4~of \cite{ac-gi-at-tmna}.

The formulation of nonlocal boundary conditions (BCs) in terms of Stieltjes integrals is fairly general and includes, as special cases, multi-point and integral conditions, namely
\begin{displaymath}\a[u]=\sum_{j=1}^m\a_j u(\eta_j)\quad\text{or}\quad \a[u]=\int_{0}^1\phi(s)u(s)ds.\end{displaymath}
The study of multi-point problems has been initiated, as far as we know, in 1908 by Picone~\cite{Picone}. For an introduction to nonlocal problems we refer to the reviews of Whyburn~\cite{Whyburn}, Conti~\cite{Conti}, Ma~\cite{rma}, Ntouyas~\cite{sotiris} and \v{S}tikonas~\cite{Stik} and to the papers by Karakostas and Tsamatos~\cite{kttmna, ktejde} and Webb and Infante~\cite{jwgi-lms, jw-gi-jlmsII}.

Webb and Infante~\cite{jw-gi-jlmsII} gave a unified method for establishing the existence of \emph{positive} solutions of a large class of ordinary differential equations of arbitrary order, subject to nonlocal BCs. The methodology in~\cite{jw-gi-jlmsII} involves the fixed point index and, in particular deals with the integral equation
\begin{equation}\label{pham-jwgi}
u(t)=\sum_{i=1}^{N}\gamma_{i}(t) \alpha_{i}[u]+\int_{0}^{1}
k(t,s)g(s) f(s,u(s))\,ds.
\end{equation}
Here the functions $\gamma_{i}$ are nonnegative and the \emph{linear} functionals $\alpha_{i}[\cdot]$ are of the type \eqref{eqst}. The results of~\cite{jw-gi-jlmsII} are well suited for dealing with differential equations of arbitrary order with many nonlocal terms. These results were applied to the study of fourth order problems that model the deflection of an elastic beam.

A common feature of the integral equations~\eqref{pham-gijw}, \eqref{pham-cit} and \eqref{pham-jwgi} is the fact that these equations are designed to deal with BVPs where the boundary conditions involve at most \emph{affine} functionals. In physical models this corresponds to feedback controllers having a \emph{linear} response. Nevertheless, in a number of applications, the response of the feedback controller can be \emph{nonlinear}; for example the nonlocal BVP 
\begin{equation}\label{cbvp}
u^{(4)}(t)-g(t)f(t,u(t))=0,\ u(0)=u'(0)=u''(1)=0,\; u'''(1)+\hat{B}(u(\eta))=0,
\end{equation}
describes a cantilever equation with
a feedback mechanism, where a spring reacts (in a nonlinear manner) to the displacement registered in a point $\eta$ of the beam. Positive solutions of the BVP~\eqref{cbvp} were investigated by Infante and Pietramala in \cite{gipp-cant} by means of the perturbed integral equation
\begin{equation*}
u(t)=\gamma(t)\hat{B}(\hat{\alpha}[u])+ \int_{0}^{1} k(t,s)g(s)f(s,u(s))\,ds,
\end{equation*}
where $\hat{B}:\mathbb{R}^{+}\to \mathbb{R}^{+}$ is a continuous, possibly nonlinear function.

Note that the idea of using perturbed Hammerstein integral equations in order to deal with the existence of solutions of BVPs with nonlinear 
BCs has been used with success in a number of papers, see, for example, the manuscripts of Alves and co-authors~\cite{amp}, Cabada~\cite{Cabada1}, Franco et al.~\cite{dfdorjp}, Goodrich~\cite{Goodrich3, Goodrich4, Goodrich5, Goodrich6, Goodrich7}, Infante~\cite{gi-caa}, Karakostas~\cite{kejde}, Pietramala~\cite{paola}, Yang~\cite{ya1,ya2} and references therein.

The existence of  \emph{nontrivial} solutions of the BVP
\begin{equation*}
%\label{eq1}
u''(t)+g(t)f(u(t))=0, u'(0)+\hat{B}(\hat{{\alpha}}[u])=0,\; \beta u'(1) + u(\eta)=0,
\end{equation*}
that models a heat-flow problem with a nonlinear controller, were discussed by Infante~\cite{gi-atl}, by means of the perturbed integral equation
\begin{equation*}
%\label{phgi}
u(t)=\gamma(t)\hat{B}(\hat{{\alpha}}[u])+\int_{0}^{1}k(t,s)g(s)f(u(s))\,ds.
\end{equation*}

On the other hand, BVPs where nonlocal terms occur in the differential equation have been studied by a number of authors. For example, the case of equations with reflection of the argument has been investigated by
Andrade and Ma~\cite{And}, Cabada and co-authors~\cite{ac-gi-at-bvp}, Piao~\cite{Pia1, Pia2}, Piao and Xin~\cite{Pia3},  
Wiener and Aftabizadeh~\cite{Wie1},
the case of equations with deviated arguments has be
en studied by Jankowski~\cite{Jan1, Jan3, Jan6}, Figueroa and Pouso~\cite{rub-rod-lms} and Szatanik~\cite{Sza1, Sza2} and the case of equations that involve the average of the solution has been considered by Andrade and Ma~\cite{And}, Chipot and Rodrigues~\cite{Chipot} and Infante~\cite{gi-caa2}.
 
Here we continue the study of~\cite{ac-gi-at-tmna, gi-atl, gi-caa2}  and discuss the existence of multiple nontrivial solutions of perturbed Hammerstein integral equations of the kind
\begin{equation*}
%\label{eqpham}
u(t)=Bu(t)+\int_0^1 k(t,s)g(s)f(s,u(s),Du(s))\,ds,
\end{equation*}
where
$B:C(I)\to C(I)$ is a compact and continuous map, $D:C(I)\to L^\infty(I)$ a continuous map and $f$ is a non-negative $L^\infty$-Carath\'eodory function. In our setting $B$ and $D$ are possibly nonlinear.
This type of integral equation arises naturally when dealing with a BVP where nonlocal terms occur in the differential equation and in the BCs. Here we prove the existence of multiple solutions that are allowed to \emph{change sign}, in the spirit of the earlier works~\cite{gijwjiea, gijwjmaa, gijwems}. 
The methodology relies on the use of the theory of fixed point index. Some of our criteria involve the principal eigenvalue of an associated linear operator. We make use of ideas from the papers~\cite{ac-gi-at-tmna, gijwjiea, gi-pp-ft, jw-tmna, jwgi-lms, jwkleig} and our results complement the ones of \cite{ac-gi-at-tmna, gi-caa, gi-pp-ft, jw-gi-jlmsII}.

In the last Section, for illustrative purposes we study, in two examples, the nonlocal differential equation
\begin{equation*}
u''(t)+f(t,u(t))+\c(t)u(\eta(t))=0,
\end{equation*}
subject to different BCs, showing that the constants occurring in our theoretical results can be computed.

\section{The integral operator}
Let $I:=[0,1]$, $\bR^+=(0,+\infty)$. We work in the space $C(I)$ of the continuous functions on $ I $ endowed with the usual norm
 $\|w\|:=\max_{t\in I} |w(t)|$. We also use the space $ L^{\infty}(I) $, where we denote (with an abuse of notation) its norm by $\|w\|:= \esssup_{t\in I}  |w(t)|$.
In this section we obtain results for the fixed points of the integral operator
\begin{equation}\label{eqthamm}
Tu(t)=Bu(t)+\int_0^1 k(t,s)g(s)f(s,u(s),Du(s))\,ds,
\end{equation}
where
$B:C(I)\to C(I)$ is a continuous and compact map, $D:C(I)\to L^{\infty}(I)$ a continuous map and $f$ is a non-negative $L^\infty$-Carath\'eodory function. $B$ and $D$ are not necessarily linear.
\par
Given $u: I\to\bR$, we define $u^+(s):=\max\{u(s),0\}$, $u^-(s):=\max\{-u(s),0\}$. We recall that a \emph{cone} $K$ in a Banach space $X$  is a closed
convex set such that $\lambda \, x\in K$ for $x \in K$ and
$\lambda\geq 0$ and $K\cap (-K)=\{0\}$. We denote by $P$ the cone of non-negative functions in $C(I)$. 

We make the following assumptions on the terms that occur in \eqref{eqthamm}.
\begin{enumerate}
\item [$(C_{1})$] $k: I \times I \rightarrow \bR$ is measurable, and for every $\tau\in
 I $ we have
\begin{equation*}
\lim_{t \to \tau} |k(t,s)-k(\tau,s)|=0 \;\text{ for almost every } s \in
 I .
\end{equation*}{}
\item [$(C_{2})$]
 There exist a subinterval $[a,b] \subseteq  I $, a function
$\Phi \in L^{1}(I)$, and a constant $c_{1} \in (0,1]$ such
that
\begin{align*}
|k(t,s)|\leq \Phi(s) \text{ for } &t \in  I  \text{ and almost
every } \, s\in  I,\\
 k(t,s) \geq c_{1}\Phi(s) \text{ for } &t\in
[a,b] \text{ and almost every } \, s \in  I .
\end{align*}
\item [$(C_{3})$] $g,\,g\,\Phi \in L^1( I) $, $g(t) \geq 0$ for almost every $t \in I$, and $\int_a^b
\Phi(s)g(s)\,ds >0$.{}
\item  [ $(C_{4})$] There exist measurable functions $f_i:I\times \R \to [0,\infty)$, $\gamma_{ij}:I\to\bR$, $j=1,\dots,m_i$, $\d_{ij}:I\to\bR$, $j=1,\dots,n_i$, continuous functionals $\a_{ij}:C(I)\to\bR$, $j=1,\dots,m_i$ and $\b_{ij}:C(I)\to\bR$, $j=1,\dots,n_i$, $i=1,2$ and a constant $c\in(0,c_1]$ such that the set
$$K:=\{u\in C(I) \ :\ \min_{t\in [a,b]}u(t)\ge c\|u\|,\ \a_{ij}[u],\b_{ij}[u]\ge0\},$$
is a cone satisfying the following inequalities for every $u\in K$:
\begin{align*} & \sum_{j=1}^{m_1}\c_{1j}(t)\a_{1j}[u]+f_1(t,u(t))\le f(t,u(t),Du(t)), \text{ for every } t\in[a,b],\\
& \sum_{j=1}^{n_1}\d_{1j}(t)\b_{1j}[u]\le Bu(t), \text{ for every } t\in[a,b],\\
 & f(t,u(t),Du(t))\le\sum_{j=1}^{m_2}\c_{2j}(t)\a_{2j}[u]+f_2(t,u(t)), \text{ for every } t\in I,\\ &
Bu(t)\le \sum_{j=1}^{n_2}\d_{2j}(t)\b_{2j}[u],\ |Bu(t)|\le \sum_{j=1}^{n_2}|\d_{2j}(t)|\b_{2j}[u], \text{ for every } t\in I.\end{align*}

\item[$(C_{5})$] The nonlinearities $f:I\times\bR^2\to [0,+\infty)$, $f_1:I\times\bR\to [0,+\infty)$ and $f_2:I\times\bR\to [0,+\infty)$ satisfy $L^\infty$-Carath\'{e}odory conditions, that is $f(\cdot,u,v)$, $f_i(\cdot,u)$ are measurable for each fixed $u,v\in \R$; $f(t,\cdot,\cdot)$, $f_i(t,\cdot)$ are continuous for a.\,e. $t\in  I $, and for each $r>0$, there exists $\phi_{r} \in
L^{\infty}(I) $ such that
\begin{equation*}
f(t,u,v),f_i(t,u)\le \phi_{r}(t) \;\text{ for all } \; u,v\in
[-r,r],\;\text{ and a.\,e. } \; t\in  I .
\end{equation*}
\item[$(C_{6})$] $\c_{ij}\in C (I)$. Let $\til\c_{ij}(t):=\int_0^1|k(t,s)|g(s)\c_{ij}(s)ds$. Assume that the families of functions $\{\til\c_{ij},\d_{ij}\}_{i,j}$ belong to $K\backslash\{0\}$.
\item[$(C_{7})$]Define $\varphi_i=(\a_{i1},\dots,\a_{im_i},\b_{i1},\dots,\b_{in_i})$, $\psi_i=(\til\c_{i1},\dots,\til\c_{im_i},\d_{i1},\dots,\d_{in_i})$ and denote by $\varphi_{ij}$ and $\psi_{ij}$ the $j$-th element of $\varphi_i$ and $\psi_i$ respectively.
We have the following inequalities.
\begin{align}\label{superlin}
\varphi_{1j}[\tau_1 u+\tau_2v]\ge & \tau_1\varphi_{1j}[u]+\tau_2\varphi_{1j}[v],\ \tau_1,\tau_2\in\bR^+,\ u,v\in K,\ j=1,\dots, m_1+n_1,\\\label{sublin}
\varphi_{2j}[\tau_1 u+\tau_2v]\le & |\tau_1|\,\varphi_{2j}[u]+|\tau_2|\,\varphi_{2j}[v],\ \tau_1,\tau_2\in\bR,\ u,v\in K,\ j=1,\dots, m_2+n_2,\\ \label{e-phi-lambda}
\varphi_{2j}[\tau u]\ge & \varphi_{2j}[u],\ \tau \ge 1, \ u,v\in K,\ j=1,\dots, m_2+n_2.
\end{align}

Furthermore, assume that 
\begin{align*}
 \mathcal{K}_{\varphi_{1j}}(s) & :=\varphi_{1j}[k(\cdot,s)]\ge 0,\text{ for a.\,e.}\ s\in I,\ \mathcal{K}_{\varphi_{1j}}\in L^\infty(I),\ j=1,\dots,m_1+n_1, \\
 \mathcal{K}_{\varphi_{2j}}(s) & :=\varphi_{2j}[|k(\cdot,s)|]\ge 0,\text{ for a.\,e.}\ s\in I,\ \mathcal{K}_{\varphi_{2j}}\in L^\infty(I),\ j=1,\dots,m_2+n_2,
\end{align*}
\begin{equation}
 \label{isuperlin}
 \begin{aligned}
\varphi_{1j}\left[\int_a^bk(\cdot,s)g(s)f_1(s,u(s))ds\right]\ge  \int_a^b\varphi_{1j}[k(\cdot,s)]g(s)f_1(s,u(s))ds, & \\ u\in K,\ j=1,\dots, m_1+n_1, &
\end{aligned}
\end{equation}
\begin{equation}
\begin{aligned}
\label{isublin}
\varphi_{2j}\left[\int_0^1|k(\cdot,s)|g(s)f_2(s,u(s))ds\right]\le  \int_0^1\varphi_{2j}[|k(\cdot,s)|]g(s)f_2(s,u(s))ds, & \\ u\in K,\ j=1,\dots, m_2+n_2. &
\end{aligned}
\end{equation}

\item [$(C_{8})$]  Define $M_k=(\varphi_{ki}[\psi_{kj}])_{i,j=1}^{m_k+n_k}\in\mathcal{M}_{m_k+n_k}(\bR),\ k=1,2$. Assume that their respective spectral radii $r$ satisfy that $r(M_1)<1/c_1$ and  $r(M_2)<1$.

\item [$(C_{9})$] Let $c$ and $K$ be given in $(C_4)$ and assume that
\begin{displaymath}\sum_{j=1}^{n_1}\d_{1j}(t)\b_{1j}[u]\ge c\sum_{j=1}^{n_2}\|\d_{2j}\|\b_{2j}[u]\text{ for every } t\in[a,b] \; \mbox{and}\; u\in K.\end{displaymath}

\item [$(C_{10})$] $\varphi_{1j}[u]\ge\varphi_{1j}[v]$ for every $u,v\in K$ such that $u(t)\ge v(t)$ for all $t\in [a,b]$, $\varphi_{2j}[u]\ge\varphi_{2j}[v]$ for every $u,v\in K$ such that $u(t)\ge v(t)$ for all $t\in I$ and $\varphi_{ij}[u]\ge0$ for every $u\in P$.\\
We also assume $\varphi_{ij}[Tu],\varphi_{ij}[F_1u],\varphi_{ij}[L_1u]\ge0$ for every $u\in K$ where, for $t\in[0,1]$,
\begin{align*}F_1u(t):= & \int_{a}^{b}k(t,s)g(s)f_1(s,u(s))ds,\\  
 L_1u(t):= & \int_a^bk^+(t,s)g(s)u(s)ds.\end{align*}
\end{enumerate}

\begin{rem} Observe that from conditions $(C_6)$ and $(C_8)$ we know that $\psi_{ij}\in K$ and $M_k$ has positive entries for $k=1,2$. Furthermore, if the $\varphi_{ij}$ are linear functionals defined as integrals with respect to a measure of bounded variation, the properties \eqref{superlin}--\eqref{isublin} are satisfied. 
\end{rem}
\begin{rem} In \cite{gi-pp-ft} Infante and co-authors used the cone
$$\hat{K}=K_0 \cap\{u\in C[0,1]: \alpha[u] \geq 0\}\cap\{u\in C[0,1]: \beta[u] \geq 0\},$$
where $$K_{0}:=\{u\in C[0,1]: \min_{t \in [a,b]}u(t)\geq c \|u\|\}$$ and $\alpha$ and $\beta$ are continuous, linear functionals.
Note that the functions in $K_0$ are positive on the
subset $[a,b]$ but are allowed to change sign in $[0,1]$. The cone $K_{0}$ is similar to
a cone of \emph{non-negative} functions first used by Krasnosel'ski\u\i{}, see~\cite{krzab}, and D.~Guo, see e.g.~\cite{guolak},
has been introduced by Infante and Webb in \cite{gijwjiea} and later been used in a number of papers, see for example~\cite{ac-gi-at-bvp, dfgior1, Fan-Ma, giems,  gi-pp, gijwjmaa, gijwems, nietopim} and references therein.

On the other hand, Webb and Infante \cite{jw-gi-jlmsII} used the cone 
$$
\tilde{K}=\{ u \in P: \min_{t \in [a,b]} u(t) \geq c \|u\|, \; \beta_{i}[u] \geq 0 \;\;\text{for every}\;j\},
$$ where $\beta_{i}$ are continuous, linear functionals. Thus the cone $K$ can be seen as an analogue of the cones $\hat{K}$ and $\tilde{K}$ when \emph{nonlinear} functionals are involved.

\end{rem}
\begin{rem}\label{remti} Condition \eqref{sublin} is some sort of triangle inequality. In particular, it implies a kind of \textit{second triangle inequality}. Indeed, let $u-v,v\in K$, then we have
\begin{displaymath}\varphi_{2j}[u]=\varphi_{2j}[(u-v)+v]\le\varphi_{2j}[u-v]+\varphi_{2j}[v].\end{displaymath}
Hence we obtain
\begin{displaymath}\varphi_{2j}[u]-\varphi_{2j}[v]\le\varphi_{2j}[u-v].\end{displaymath}
Therefore,
\begin{displaymath}\varphi_{2j}[v]=\varphi_{2j}[u-(u-v)]\le\varphi_{2j}[u-v]+\varphi_{2j}[u].\end{displaymath}
Thus,
\begin{displaymath}\varphi_{2j}[v]-\varphi_{2j}[u]\le\varphi_{2j}[u-v],\end{displaymath}
which implies, in particular,
\begin{displaymath}|\varphi_{2j}[u]-\varphi_{2j}[v]|\le\varphi_{2j}[u-v].\end{displaymath}
\end{rem}

\begin{rem}\label{remF2}
It follows from $(C_{10})$ that if $u\in K$, then $u^+,|u|\in K$. Furthermore we have $\varphi_{ij}[F_2u]\geq 0$ for every  $u\in K$, where 
$$
F_2u(t):= \int_{0}^{1} |k(t,s)|\,g(s)\,f_2(s,u(s))ds.
$$
\end{rem}
\begin{rem}
The first part of condition $(C_{10})$ implies that, if $u,v\in K$ satisfy $u|_{[a,b]}=v|_{[a,b]}$, then $\varphi_{1j}[u]=\varphi_{1j}[v]$.
\end{rem}

\begin{lem}The operator  $N_f(u,v)(t)=\int_0^1k(t,s)g(s)f(s,u(s),v(s))ds$ maps $C(I)\times L^\infty(I)$ to $C(I)$ and is compact and continuous.
\end{lem}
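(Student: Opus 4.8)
The plan is to verify the three claims---well-definedness with values in $C(I)$, compactness, and continuity---each by a dominated convergence argument built on the uniform bound supplied by $(C_2)$, $(C_3)$ and the $L^\infty$-Carath\'eodory condition $(C_5)$. First I would fix $(u,v)\in C(I)\times L^\infty(I)$, set $r:=\max\{\|u\|,\|v\|\}$, and observe that by $(C_5)$ there is $\phi_r\in L^\infty(I)$ with $f(s,u(s),v(s))\le\phi_r(s)$ for a.\,e. $s\in I$. For $t,\tau\in I$ one has
\begin{equation*}
|N_f(u,v)(t)-N_f(u,v)(\tau)|\le\int_0^1|k(t,s)-k(\tau,s)|\,g(s)\,\phi_r(s)\,ds,
\end{equation*}
and since $|k(t,s)-k(\tau,s)|\,g(s)\,\phi_r(s)\le 2\|\phi_r\|\,\Phi(s)g(s)\in L^1(I)$ by $(C_2)$ and $(C_3)$, while the integrand tends to $0$ a.\,e. as $t\to\tau$ by $(C_1)$, dominated convergence gives $N_f(u,v)\in C(I)$.

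Next I would prove compactness. Let $\mathcal B\subseteq C(I)\times L^\infty(I)$ be bounded, so that $\|u\|,\|v\|\le r$ for every $(u,v)\in\mathcal B$ and a single $\phi_r$ dominates all the corresponding nonlinearities. Uniform boundedness of $N_f(\mathcal B)$ is immediate from
\begin{equation*}
|N_f(u,v)(t)|\le\int_0^1\Phi(s)g(s)\phi_r(s)\,ds\le\|\phi_r\|\int_0^1\Phi(s)g(s)\,ds<\infty.
\end{equation*}
For equicontinuity, the estimate displayed above bounds $|N_f(u,v)(t)-N_f(u,v)(\tau)|$ by the quantity $\int_0^1|k(t,s)-k(\tau,s)|g(s)\phi_r(s)\,ds$, which is \emph{independent of} $(u,v)\in\mathcal B$ and, by the same dominated convergence argument, tends to $0$ as $t\to\tau$. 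Hence $N_f(\mathcal B)$ is equicontinuous, and the Arzel\`a--Ascoli theorem yields that $N_f(\mathcal B)$ is relatively compact in $C(I)$.

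Finally I would establish continuity of $N_f$. Given $(u_n,v_n)\to(u,v)$ in $C(I)\times L^\infty(I)$, the norms $\|u_n\|,\|v_n\|$ are bounded by some $r$, so one common $\phi_r$ dominates every $f(s,u_n(s),v_n(s))$. Since $u_n\to u$ uniformly and $v_n\to v$ in $L^\infty(I)$ forces $v_n\to v$ a.\,e. (discarding a countable union of null sets), the continuity of $f(s,\cdot,\cdot)$ from $(C_5)$ gives $f(s,u_n(s),v_n(s))\to f(s,u(s),v(s))$ for a.\,e. $s$. Using $|k(t,s)|\le\Phi(s)$ one bounds, uniformly in $t$,
\begin{equation*}
\|N_f(u_n,v_n)-N_f(u,v)\|\le\int_0^1\Phi(s)g(s)\,|f(s,u_n(s),v_n(s))-f(s,u(s),v(s))|\,ds,
\end{equation*}
whose integrand tends to $0$ a.\,e. and is dominated by $2\|\phi_r\|\Phi(s)g(s)\in L^1(I)$; dominated convergence then sends the right-hand side to $0$. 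The main obstacle is precisely the uniformity in $t$ in both the equicontinuity and the continuity arguments: the crucial point is that the $t$-dependence sits entirely inside $k$, which is controlled \emph{uniformly} by the single integrable majorant $\Phi g\phi_r$ coming from $(C_2)$, $(C_3)$ and $(C_5)$, so that a single application of the dominated convergence theorem settles each case.
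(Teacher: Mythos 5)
Your proposal is correct and follows essentially the same route as the paper: membership in $C(I)$, equicontinuity and continuity are all reduced to a single integrable majorant $\Phi\, g\, \phi_r$ and the Dominated Convergence Theorem, with Arzel\`a--Ascoli giving compactness. The only cosmetic difference is that the paper makes the passage from pointwise to uniform equicontinuity explicit via a finite subcover of $I$, a step you leave implicit (harmless, since equicontinuity at each point of a compact interval suffices for Arzel\`a--Ascoli).
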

\begin{proof}
Fix $(u,v)\in C(I)\times L^\infty(I)$ and let $(t_n)_{n\in\bN}\subset I$ be such that $\lim\limits_{n\to\infty}{(t_n)}=t \in I$. Take $r=\|(u,v)\|:=\|u\|+\|v\|$ and consider 
\begin{displaymath}h_n(s):=k(t_n,s)\,g(s)\,f(s,u(s),v(s)),\ \text{for a.e.}\ s \in I.\end{displaymath}
We have, by $(C_1)$, that
\begin{displaymath}\lim_{n\to\infty}{h_n(s)}=h(s):=k(t,s)\,g(s)\,f(s,u(s),v(s)),\ \text{for a.e.}\ s\in I.
\end{displaymath}
On the other hand, $|h_n|\le\Phi\,g\,\|\phi_r\|_\infty$ for all $n \in \bN$. So, by condition $(C_3)$, the sequence $\{h_n\}$ is uniformly bounded in $L^1(I)$ so, by the Dominated Convergence Theorem, we have $\lim\limits_{n\to\infty} N_f(u,v)(t_n)=N_f(u,v)(t)$ and therefore $N_f(u,v) \in C(I)$.\par

Now we show that $N_f$ is compact. Indeed, let $\tilde{B}\subset C(I)\times L^\infty(I)$ be a bounded set, i.e. there exists $r>0$ such that  $\|(u,v)\| \le r<+\infty$ for all $(u,v) \in \tilde{B}$. 

In order to use the Arzel\`a-Ascoli Theorem, we have to verify that $N_f(\tilde{B})$ is a uniformly bounded and equicontinuous set in $C(I)$.

The uniformly boundedness follows from the fact that, for all $t \in I$, the following inequality holds
$$\begin{array}{lll}
| N_f(u,v)(t)| 
\le \int_0^1|k(t,s)| g(s) f(s,u(s),v(s)) ds 
\le  \int_0^1\Phi(s) g(s)\phi_r(s) ds,
\end{array}
$$
for all $(u,v) \in \tilde{B}$. 

On the other hand, taking into account $(C_1)-(C_3)$ and the Dominated Convergence Theorem, we know that for any $\tau \in I$ given, the following property holds:

$$\begin{array}{lll}
\displaystyle \lim_{t \to \tau}  \int_0^1|k(t,s)-k(\tau,s)| g(s) \phi_r(s)  ds=0.
\end{array}
$$

As a consequence,  for any $\tau \in I$ and $\e >0$, there is $\d(\tau)> 0$ such that, if $|t-\tau| < \d(\tau)$, then, for all $(u,v) \in \tilde{B}$, the following inequalities are fulfilled:

$$\begin{array}{lll}
\left|N_f(u,v)(t)- N_f(u,v)(\tau)\right| 
&\le&  \int_0^1|k(t,s)-k(\tau,s)| g(s) f(s,u(s),v(s)) ds\\
&\le&  \int_0^1|k(t,s)-k(\tau,s)| g(s)\phi_r(s) ds < \e.
\end{array}
$$

Now, $\{(\tau-\d(\tau),\tau+\d(\tau))\}_{\tau\in I}$ is an open covering of $I$. Since $I$ is compact, there exists a finite subcovering of indices $\tau_1,\dots,\tau_k$. 

To deduce the equicontinuity of the set $\tilde{B}$ is enough to take $\d_0=\min\{\d(\tau_1),\dots,\d(\tau_k)\}$.

To show the continuity of operator $N_f$, consider $\{(u_n,v_n)\}$ a convergent sequence in $C(I) \times L^\infty(I)$ to $(u,v) \in C(I) \times L^\infty(I)$. In particular, for a.e. $s \in I$, the sequences $\{u_n(s)\}$ and $\{v_n(s)\}$ converge pointwisely to $u(s)$ and $v(s)$ respectively.

Define $y_n(s)=f(s,u_n(s),v_n(s))$. By Condition $(C_5)$, we know that there is $y(s):=\lim\limits_{n\to\infty}y_{n}(s)$ for a.e. $s  \in I$. Since  $|y_n|\le\|\phi_r\|_\infty$ for all $n \in \bN$, we have that the sequence $\{y_n\}$ is uniformly bounded in $L^\infty(I)$. Now, using that $\Phi g \in L^1(I)$, the Dominated Convergence Theorem ensures that  
\begin{align*}
\lim_{n \to \infty} \int_0^1{\Phi(s) g(s) |y_{n}(s)-y(s)| ds}=0.
\end{align*}

Furthermore, using the inequality
\begin{align*}
| N_f(u_{n},v_{n})(t)-N_f(u,v)(t)| 
&\le \int_0^1{|k(t,s)| g(s) |y_{n}(s)-y(s)| ds}\\ 
&\le \int_0^1{\Phi(s) g(s) |y_{n}(s)-y(s)| ds},
\end{align*}
we deduce that such convergence is uniform in $I$, and the assertion holds.
\end{proof}
\begin{lem}
%\label{wdclem}
The operator $T$ defined in \eqref{eqthamm} maps $K$ into $K$ and is continuous and compact.
\end{lem}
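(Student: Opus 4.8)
The plan is to exploit the decomposition $T=B+N_f(\Id,D)$, where $N_f$ is the operator of the preceding Lemma and $\Id$ is the identity on $C(I)$, and to treat separately the two claims: (a) $T$ is continuous and compact, and (b) $T(K)\subseteq K$. For (a), since $D\colon C(I)\to L^\infty(I)$ is continuous, the map $u\mapsto(u,Du)$ is continuous from $C(I)$ into $C(I)\times L^\infty(I)$ and carries bounded sets to bounded sets (this is automatic for the maps $D$ occurring in the applications, e.g. $Du=u\circ\sigma$). Composing with $N_f$, which is continuous and compact by the previous Lemma, shows that $u\mapsto N_f(u,Du)$ is continuous and compact; adding the continuous compact map $B$ yields that $T$ is continuous and compact, and in particular $Tu\in C(I)$ for every $u$.

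For (b) I fix $u\in K$ and check the three defining requirements of $K$. The positivity of the functionals, namely $\alpha_{ij}[Tu]\ge0$ and $\beta_{ij}[Tu]\ge0$ for all $i,j$, is exactly the hypothesis $\varphi_{ij}[Tu]\ge0$ contained in $(C_{10})$, because the components of each $\varphi_i$ run through all the $\alpha_{ij}$ and all the $\beta_{ij}$; so this requirement is free. The substantive point is the Harnack-type inequality $\min_{t\in[a,b]}Tu(t)\ge c\|Tu\|$.

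The key observation is that the two summands of $T$ each satisfy such an inequality, and that their factors can be merged into the single constant $c$ defining $K$. For the integral part, put $\Sigma:=\int_0^1\Phi(s)g(s)f(s,u(s),Du(s))\,ds$; using $|k(t,s)|\le\Phi(s)$ from $(C_2)$ together with $f\ge0$ gives $\|N_f(u,Du)\|\le\Sigma$, while $k(t,s)\ge c_1\Phi(s)\ge0$ on $[a,b]$ gives $N_f(u,Du)(t)\ge c_1\Sigma\ge c_1\|N_f(u,Du)\|$ for $t\in[a,b]$, hence $N_f(u,Du)(t)\ge c\|N_f(u,Du)\|$ since $c\le c_1$. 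For the operator $B$, the bound $|Bu(t)|\le\sum_j|\delta_{2j}(t)|\beta_{2j}[u]$ from $(C_4)$ gives $\|Bu\|\le\sum_j\|\delta_{2j}\|\beta_{2j}[u]$, whereas the lower bound $Bu(t)\ge\sum_j\delta_{1j}(t)\beta_{1j}[u]$ on $[a,b]$ combined with precisely condition $(C_9)$ gives $Bu(t)\ge c\sum_j\|\delta_{2j}\|\beta_{2j}[u]\ge c\|Bu\|$ for $t\in[a,b]$. Adding the two estimates and using $\min_{[a,b]}\bigl(Bu+N_f(u,Du)\bigr)\ge\min_{[a,b]}Bu+\min_{[a,b]}N_f(u,Du)$ together with the triangle inequality $\|Tu\|\le\|Bu\|+\|N_f(u,Du)\|$ yields $\min_{t\in[a,b]}Tu(t)\ge c\|Tu\|$, as needed.

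I expect the main (and essentially only) obstacle to be the bookkeeping of the two Harnack constants: the integral term naturally produces the factor $c_1$ while $B$ produces only $c$, and it is the hypothesis $c\le c_1$ from $(C_4)$ together with $(C_9)$ that allows both to collapse to the common factor $c$. The continuity and compactness step is routine once $T$ is written as $B+N_f(\Id,D)$ and the previous Lemma is invoked, the only thing to watch being that $D$ sends bounded sets to bounded sets, so that the compactness of $N_f$ can be applied on the image $\{(u,Du)\}$.
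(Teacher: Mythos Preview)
Your proposal is correct and follows essentially the same approach as the paper. The paper bounds $|Tu(t)|$ and $Tu(t)$ directly as sums rather than treating $Bu$ and $N_f(u,Du)$ as separate Harnack-type pieces, but the ingredients ($(C_2)$, $(C_4)$, $(C_9)$, $(C_{10})$ and the decomposition $T=B+N_f\circ(\Id\times D)$ for compactness) are identical; your remark that $D$ must send bounded sets to bounded sets is a point the paper glosses over.
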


\begin{proof}
Take $u\in K$. Then, by $(C_2)$, $(C_4)$ and $(C_5)$, we have
\begin{align*}|Tu(t)|= & \left|Bu(t)+\int_0^1 k(t,s)g(s)f(s,u(s),Du(s))\,ds\right|\\ \le & \sum_{j=1}^{n_2}\left|\d_{2j}(t)\right|\b_{2j}[u]+\int_0^1 \Phi(s)g(s)f(s,u(s),Du(s))\,ds.\end{align*}

Hence, we obtain
\begin{displaymath}\|Tu\|\le\sum_{j=1}^{n_2}\|\d_{2j}\|\b_{2j}[u]+\int_0^1 \Phi(s)g(s)f(s,u(s),Du(s))\,ds.\end{displaymath}

Combining this fact with $(C_2)$, $(C_4)-(C_6)$ and $(C_9)$, for $t\in[a,b]$, we get
\begin{equation*}\begin{aligned}
Tu(t) \geq & \sum_{j=1}^{n_1}\d_{1j}(t)\b_{1j}[u]+c_1\int_0^1 \Phi(s)g(s)f(s,u(s),Du(s))\,ds \\ \ge & c\sum_{j=1}^{n_2}\|\d_{2j}\|\b_{2j}[u]+c\int_0^1 \Phi(s)g(s)f(s,u(s),Du(s))\,ds\ge c\|Tu\|.
\end{aligned}
\end{equation*}%
Furthermore, by  $(C_{10})$, $\varphi_{ij} [Tu]\ge0$.
Hence we have $Tu\in K$. \par
Now, we have that the operator $N_f: C(I)\times L^\infty(I)\to C(I)$ such that $N_f(u,v)(t)=\int_0^1k(t,s)g(s)f(s,u(s),v(s))ds$ is compact. 

Since $D$ is continuous, $\Id\times D$ is also continuous so $N_f\circ(\Id\times D)$ is compact. Since $T$ is the sum of two compact operators, it is compact.
\end
{proof}
\begin{rem}\label{remPK} Similarly, from condition $(C_2)$, we observe here that $F_1$, $F_2$ and $L_1$ map $K$ to $K$. To see this, observe that for all $t \in [a,b]$ and $u\in K$ the following properties hold:
\begin{align*} & F_1u(t):=\int_{a}^{b}k(t,s)g(s)f_1(s,u(s))ds\ge c\int_{a}^{b}\Phi(s)g(s)f_1(s,u(s))ds\ge c\|F_1u\|,\\
 & F_2u(t):=\int_{0}^{1}|k(t,s)|g(s)f_2(s,u(s))ds\ge c\int_{0}^{1}\Phi(s)g(s)f_2(s,u(s))ds\ge c\|F_2u\|,\\
&  L_1u(t):=\int_{a}^{b}k^+(t,s)g(s)u(s)ds\ge c\int_{a}^{b}\Phi(s)g(s)u(s)ds\ge c\|L_1u\|.\end{align*}
Also, $\varphi_{ij}[F_1u],\varphi_{ij}[F_2u],\varphi_{ij}[L_1u]\ge0$ by $(C_{10})$ and Remark \ref{remF2}.\par

On the other hand, $L_1$ maps $P$ to $P$, but also maps $P$ to $K$. The proof goes as above.
\end{rem}

\section{Fixed point index calculations}

The following Lemma summarizes some classical results regarding the fixed point index, for more 
details see~\cite{amann, guolak}. Let $U$ be an open bounded subset of $C(I)$, we denote by $U_K:=U \cap K$, which is an open subset in the topology relative to $K$.
\begin{lem} \label{lemind}
Let $U$ be an open bounded set with $0\in U_{K}$ and
$\overline{U}_{K}\ne K$. Assume that $F:\overline{U}_{K}\to K$ is
a compact map such that $x\neq Fx$ for all $x\in \partial U_{K}$. Then
the fixed point index $i_{K}(F, U_{K})$ has the following properties.
\begin{itemize}
\item[(1)] If there
exists $e\in K\setminus \{0\}$ such that $x\neq Fx+\lambda e$ for
all $x\in \partial U_K$ and all $\lambda
>0$, then $i_{K}(F, U_{K})=0$.
\item[(2)] If 
$\mu x \neq Fx$
for all $x\in
\partial U_K$ and for every $\mu \geq 1$, then $i_{K}(F, U_{K})=1$.
\item[(3)] If $i_K(F,U_K)\ne0$, then $F$ has a fixed point in $U_K$.
\item[(4)] Let $U^{1}$ be open in $X$ with
$\overline{U^{1}}\subset U_K$. If $i_{K}(F, U_{K})=1$ and
$i_{K}(F, U_{K}^{1})=0$, then $F$ has a fixed point in
$U_{K}\setminus \overline{U_{K}^{1}}$. The same result holds if
$i_{K}(F, U_{K})=0$ and $i_{K}(F, U_{K}^{1})=1$.

\end{itemize}
\end{lem}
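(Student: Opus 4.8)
The plan is to deduce the four properties from the standard axioms of the fixed point index for compact maps on a cone---normalization, homotopy invariance, additivity (excision) and the solution property---exactly as developed in \cite{amann, guolak}. By the preceding Lemma $F$ is a compact self-map of $K$, and by hypothesis $F$ is fixed-point-free on $\partial U_K$, so $i_K(F,U_K)$ is well defined; everything then reduces to exhibiting admissible homotopies and invoking the appropriate axiom.

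For (2) I would use the homotopy $H(x,t)=tFx$, $t\in[0,1]$, joining $F$ to the zero map. It is admissible on $\partial U_K$: if $x=tFx$ for some $x\in\partial U_K$ and $t\in(0,1]$, then $Fx=\mu x$ with $\mu=1/t\ge 1$, contradicting the hypothesis; and $t=0$ would force $x=0$, which is impossible since $0\in U_K$ and hence $0\notin\partial U_K$. Homotopy invariance gives $i_K(F,U_K)=i_K(0,U_K)$, and the normalization property (the index of the constant map with value $0\in U_K$ equals $1$) yields the value $1$.

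For (1) I would push in the direction of $e$. Since $\overline{U}_K$ is bounded and $F$ is compact, $\sup_{x\in\overline{U}_K}\|Fx\|<\infty$, so choosing $\lambda_0>0$ large enough the map $x\mapsto Fx+\lambda_0 e$ has no fixed point in $\overline{U}_K$ (its range leaves the bounded set), whence $i_K(F(\cdot)+\lambda_0 e,U_K)=0$. The homotopy $H(x,t)=Fx+t\lambda_0 e$, $t\in[0,1]$, is then admissible on $\partial U_K$ precisely because of the hypothesis $x\ne Fx+\lambda e$ for all $\lambda>0$ (the case $\lambda=0$ being the standing assumption), so homotopy invariance gives $i_K(F,U_K)=0$. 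I expect this to be the most delicate step, as one must verify both that the pushed map is genuinely fixed-point-free for large $\lambda_0$ and that the entire homotopy segment avoids the boundary.

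Property (3) is the solution property of the index: were $F$ fixed-point-free on all of $\overline{U}_K$, then by excision/permanence the index would vanish, so $i_K(F,U_K)\ne 0$ forces a fixed point in $U_K$. Finally (4) follows from additivity: with $\overline{U^{1}}\subset U_K$ and $F$ admissible on $\partial U_K^{1}$ and $\partial U_K$, one has $i_K(F,U_K)=i_K(F,U_K^{1})+i_K(F,U_K\setminus\overline{U_K^{1}})$; if the two listed indices differ, the remaining term is nonzero, and (3) produces a fixed point in $U_K\setminus\overline{U_K^{1}}$. The only real obstacle throughout is checking admissibility of each homotopy on the relevant boundary; once the index axioms are taken as given, the analytic content is minimal.
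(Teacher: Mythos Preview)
Your sketch is correct and follows the standard route through the index axioms. Note, however, that the paper does not actually prove this lemma: it is stated there as a summary of classical facts, with the proof deferred to the references \cite{amann, guolak}. So there is no ``paper's proof'' to compare against; you have supplied one where the authors chose to cite instead.

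One small slip: you write ``By the preceding Lemma $F$ is a compact self-map of $K$,'' but Lemma~\ref{lemind} is stated for an arbitrary compact map $F:\overline{U}_K\to K$, not for the specific operator $T$ of the paper. Compactness of $F$ is already part of the hypothesis here, so no appeal to the earlier lemma is needed (or appropriate). Apart from that, the arguments for (1)--(4) are the textbook ones and are fine as written; in (1) your norm estimate $\|Fx+\lambda_0 e\|\ge \lambda_0\|e\|-\sup\|Fx\|$ is exactly what is needed to force the pushed map off $\overline{U}_K$, and in (4) the well-definedness of $i_K(F,U_K^1)$ already entails that $F$ is fixed-point-free on $\partial U_K^1$, so additivity applies.
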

For $\rho>0$ we define the following open subsets of $K$:
\begin{equation*}
  K_{\rho}:=\{u \in K: \|u\|<{\rho}\}, \;\;
  V_{\rho}:=\{u \in K: \min_{t\in[a,b]}u(t)<{\rho}\}.
\end{equation*}
The set $V_\rho$ was introduced in \cite{gijwems}
and is equal to the set called $\Omega_{\rho /c}$ in \cite{gijwjmaa}. The inclusions $$K_{\rho}\subset V_{\rho}\subset K_{\rho/c}$$ play a key role in our existence and multiplicity results.

If $u,\, v$ are vectors, we denote by $[u]_j$ the $j$-th component of $u$ and if we write $u\le v$ the inequality is to be interpreted component-wise. Also, we denote by $\cK_{\varphi_i}:=\(\cK_{\varphi_{ij}}\)_{j=1}^{m_i+n_i},\ i=1,2$ ($\cK_{\varphi_{ij}}$ as defined in $(C_7)$).

The following Lemma gives a sufficient condition that implies that the index is 1.

\begin{lem}
\label{thmind1}
Assume that 
\begin{enumerate}
  \item[$(\mathrm{I}_{\rho }^{1})$]
%  \label{EqB}
  there exists $\rho>0$ such that
\begin{align}\label{eqind1b}
  f_2^{-\rho,\rho}\cdot \sup_{t\in I}\(\sum_{j=1}^{m_2+n_2}|\psi_{2j}(t)|\left[(\Id-M_2)^{-1}\int_0^1\cK_{\varphi_2}(s)g(s)ds\right]_j+\sigma(t)\)<1,
\end{align}{}
where
\begin{equation*}
f_2^{-\rho,{\rho}}:=\esssup \Bigl\{ \frac{f_2(t,u)}{ \rho}:\; (t,u)\in
 I \times[-\rho, \rho]\Bigr\}
 \end{equation*}{}
 and
 \begin{displaymath} \sigma(t):=\int_0^1|k(t,s)|g(s)ds.\end{displaymath}

\end{enumerate}
Then we have $i_{K}(T, K_{\rho})=1$.
\end{lem}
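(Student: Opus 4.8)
The plan is to invoke part~(2) of Lemma~\ref{lemind}: it suffices to prove that $\mu u\neq Tu$ for every $u\in\partial K_\rho$ (so that $\|u\|=\rho$) and every $\mu\ge1$. I argue by contradiction, assuming $\mu u=Tu$ for some such $u$ and $\mu$. Taking absolute values in \eqref{eqthamm} and using the upper estimates of $(C_4)$ on $|Bu|$ and on $f$, together with $f\ge0$, $g\ge0$ and $k\le|k|$, I first obtain the pointwise bound
\[
\mu|u(t)|\le\sum_{j=1}^{m_2+n_2}|\psi_{2j}(t)|\,\varphi_{2j}[u]+F_2u(t),
\]
where the terms coming from $\til\c_{2j}$ and $\d_{2j}$ are collected into the components of $\psi_2$ (legitimately, since $\a_{2j}[u],\b_{2j}[u]\ge0$ for $u\in K$) and $F_2$ is as in Remark~\ref{remF2}. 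Because $\|u\|=\rho$, the Carath\'eodory bound gives $f_2(s,u(s))\le f_2^{-\rho,\rho}\rho$, whence $F_2u(t)\le f_2^{-\rho,\rho}\rho\,\sigma(t)$.

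The core of the proof is to control the vector $\varphi_2[u]=(\varphi_{2j}[u])_{j}$. Applying $\varphi_{2i}$ to $\mu u=Tu$ and using \eqref{e-phi-lambda} with $\mu\ge1$, I get $\varphi_{2i}[u]\le\varphi_{2i}[Tu]$. I then bound $Tu$ pointwise from above by
\[
W(t):=\sum_{j=1}^{n_2}\d_{2j}(t)\b_{2j}[u]+\sum_{j=1}^{m_2}\til\c_{2j}(t)\a_{2j}[u]+F_2u(t),
\]
which lies in $K$ since it is a nonnegative combination of elements of $K$ (using $(C_6)$ and Remark~\ref{remPK}); the monotonicity in $(C_{10})$ then gives $\varphi_{2i}[Tu]\le\varphi_{2i}[W]$. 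Expanding $\varphi_{2i}[W]$ with the triangle inequality \eqref{sublin}, identifying the $\psi$-terms with the entries of $M_2$ from $(C_8)$, and using \eqref{isublin} together with $f_2(s,u(s))\le f_2^{-\rho,\rho}\rho$ for the $F_2$-term, I reach the linear system
\[
(\Id-M_2)\,\varphi_2[u]\le f_2^{-\rho,\rho}\rho\int_0^1\cK_{\varphi_2}(s)g(s)\,ds .
\]

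To finish, I use $(C_8)$: as $r(M_2)<1$ and $M_2$ has nonnegative entries, $\Id-M_2$ is invertible with $(\Id-M_2)^{-1}=\sum_{k\ge0}M_2^{\,k}$ having nonnegative entries, so multiplying the system by it preserves the componentwise inequality and yields $\varphi_2[u]\le f_2^{-\rho,\rho}\rho\,(\Id-M_2)^{-1}\int_0^1\cK_{\varphi_2}(s)g(s)\,ds$. Substituting this together with $F_2u(t)\le f_2^{-\rho,\rho}\rho\,\sigma(t)$ into the pointwise bound for $\mu|u(t)|$, taking the supremum over $t\in I$ and recalling $\|u\|=\rho$, I obtain $\mu\rho\le f_2^{-\rho,\rho}\rho\,\sup_{t\in I}(\cdots)<\rho$ by \eqref{eqind1b}, so $\mu<1$, contradicting $\mu\ge1$. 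I expect the main obstacle to be precisely the passage from the scalar estimates to the matrix inequality and its inversion: keeping the inequality correctly oriented relies on the positivity of $(\Id-M_2)^{-1}$, guaranteed by $r(M_2)<1$ and the nonnegativity of $M_2$, and the bookkeeping linking $\psi_2$, $M_2$, and the structural inequalities \eqref{sublin}--\eqref{isublin} must be carried out with care.
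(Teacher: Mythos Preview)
Your proposal is correct and follows essentially the same route as the paper's proof: a contradiction argument for part~(2) of Lemma~\ref{lemind}, the pointwise bound $\mu|u(t)|\le\sum_j|\psi_{2j}(t)|\varphi_{2j}[u]+F_2u(t)$, the passage from $\varphi_{2i}[u]\le\varphi_{2i}[\mu u]$ via \eqref{e-phi-lambda} to the matrix inequality $(\Id-M_2)\varphi_2[u]\le\varphi_2[F_2u]$ using $(C_{10})$ and \eqref{sublin}, inversion by positivity of $(\Id-M_2)^{-1}$ from $r(M_2)<1$, and the final use of \eqref{isublin} with $f_2\le\rho f_2^{-\rho,\rho}$. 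The only cosmetic difference is that the paper first records $\varphi_2[u]\le(\Id-M_2)^{-1}\varphi_2[F_2u]$ and applies \eqref{isublin} afterward, whereas you insert the bound on $\varphi_2[F_2u]$ one step earlier; this changes nothing.
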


\begin{proof}  We show that $Tu \neq \lambda u$ for all $\lambda \geq 1$ when
$ u \in \partial K_{\rho}$, which implies that $i_{K}(T,
K_{\rho})=1$. In fact, if this does not happen, then there exist $u \in K$ with
$\|u\|=\rho$ and $\lambda \geq 1$ such that $ \lambda u(t)=Tu(t)
$.
Therefore, by $(C_4)$ and $(C_5)$,
\begin{equation}
\label{equaub}
 \lambda u(t)\le\sum_{j=1}^{m_2+n_2}\psi_{2j}(t)\varphi_{2j}[u]+F_2u(t),\ t\in I,
 \end{equation}
so, from $(C_6)$ and Remark \ref{remPK}, we have that both sides of the inequality are in $K$. As a consequence, from~\eqref{sublin}, \eqref{e-phi-lambda} and $(C_{10})$, we deduce
\begin{displaymath}\varphi_{2i}[u] \le \varphi_{2i}[\lambda\,u]\le\sum_{j=1}^{m_2+n
_2}\varphi_{2i}[\psi_{2j}]\varphi_{2j}[u]+\varphi_{2i}[F_2u],
\end{displaymath}
which, expressed in matrix notation, is
\begin{displaymath}\varphi_2[u]\le M_2\varphi_2[u]+\varphi_2[F_2u].\end{displaymath}
Hence, we have
\begin{displaymath}(\Id-M_2)\varphi_2[u]\le\varphi_2[F_2u].\end{displaymath}
Since $r(M_2)<1$, $\Id-M_2$ is invertible and $(\Id-M_2)^{-1}=\sum_{k=0}^\infty M_2^k$. Hence, $(\Id-M_2)^{-1}$ is positive and thus, due to the nonnegativeness of  $\varphi_2[F_2u]$, we deduce that
\begin{equation}\label{sold1b}\varphi_2[u]\le(\Id-M_2)^{-1}\varphi_2[F_2u].\end{equation}
Now, for all $t\in I$, using \eqref{isublin}, we have that
\begin{align*}\lambda|u(t)|= &|Tu(t)|=\left|Bu(t)+\int_0^1k(t,s)g(s)f(s,u(s),Du(s))ds\right| \\
 \le &  \left|Bu(t)\right|+\int_0^1|k(t,s)|g(s)f(s,u(s),Du(s))ds \\
 \le & \sum_{j=1}^{n_2}|\delta_{2j}(t)|\beta_{2j}[u]+\int_0^1|k(t,s)|g(s)\left[\sum_{j=1}^{m_2}\gamma_{2j}(s)\alpha_{2j}[u]+f_2(s,u(s))\right]ds\\ 
 = & \sum_{j=1}^{n_2}|\delta_{2j}(t)|\beta_{2j}[u]+\sum_{j=1}^{m_2}\int_0^1|k(t,s)|g(s)\gamma_{2j}(s)ds\,\alpha_{2j}[u]+\int_0^1|k(t,s)|g(s)f_2(s,u(s))ds\\ 
  = & \sum_{j=1}^{n_2}|\delta_{2j}(t)|\beta_{2j}[u]+\sum_{j=1}^{m_2}\til\gamma_{2j}(t)\alpha_{2j}[u]+F_2u(t) \le
  \sum_{j=1}^{m_2+n_2}|\psi_{2j}(t)|\varphi_{2j}[u]+F_2u(t) \\ \le & \sum_{j=1}^{m_2+n_2}|\psi_{2j}(t)|\left[(\Id-M_2)^{-1}\varphi_2[F_2u]\right]_j+F_2u(t)\\
\le & \sum_{j=1}^{m_2+n_2}|\psi_{2j}(t)|\left[(\Id-M_2)^{-1}\int_0^1\varphi_2[|k(t,s)|]g(s)f_2(s,u(s))ds\right]_j+F_2u(t)\\
\le & \sum_{j=1}^{m_2+n_2}|\psi_{2j}(t)|\left[(\Id-M_2)^{-1}\int_0^1\cK_{\varphi_2}(s)g(s)\rho f_2^{-\rho,\rho}ds\right]_j+\int_0^1|k(t,s)|g(s)\rho f_2^{-\rho,\rho}ds\\
\le & \rho f_2^{-\rho,\rho}\cdot \sup_{t\in I}\(\sum_{j=1}^{m_2+n_2}|\psi_{2j}(t)|\left[(\Id-M_2)^{-1}\int_0^1\cK_{\varphi_2}(s)g(s)ds\right]_j+\sigma(t)\).
\end{align*}

Taking the supremum on $t\in I$,
\begin{displaymath}\l\rho\le\rho f_2^{-\rho,\rho}\cdot \sup_{t\in I}\(\sum_{j=
1}^{m_2+n_2}|\psi_{2j}(t)|\left[(\Id-M_2)^{-1}\int_0^1|\cK_{\varphi_2}(s)|g(s)ds\right]_j+\sigma(t)\).\end{displaymath}
From \eqref{eqind1b} we obtain $\lambda \rho <\rho$, contradicting the fact that $\lambda\geq 1$.
\end{proof}

\begin{rem}
We point out, in similar way as in \cite{jwgi-nodea-08}, that a stronger (but easier to check) condition than $(\mathrm{I}_{\protect\rho }^{1})$ is given by the following.
 \begin{displaymath}f_2^{-\rho,\rho}\(\sum_{j=1}^{m_2+n_2}\|\psi_{2j}\|\left[(\Id-M_2)^{-1}\int_0^1\left|\cK_{\varphi_2}(s)\right| g(s)ds\right]_j+\frac{1}{m}\)<1.\end{displaymath}
where
\begin{equation}\label{mdef}\frac{1}{m}:=\sup_{t\in I}\sigma(t).\end{equation}
A similar constant has been considered in~\cite{jwgi-nodea-08 , ac-gi-at-bvp, ac-gi-at-tmna, gijwjiea}.
\end{rem}
The next Lemma yields a condition sufficient for the index to be 0.
\begin{lem}
%\label{thmind0}
Assume that \par
\begin{enumerate}
\item[$(\mathrm{I}_{\protect\rho }^{0})$]
%\label{EqC}
There exists $\rho>0$ such that
\[
%\label{eqind0}
 f_{1,\rho,\rho/c}\cdot \inf_{t\in [a,b]}\(\sum_{j=1}^{m_1+n_1}\psi_{1j}(t)\left[(\Id-c_1M_1)^{-1}\int_a^b\cK_{\varphi_1}(s)g(s)ds\right]_j+\int_a^bk(t,s)g(s)ds\)>1,
\]
where
\begin{equation*}
f_{1,\rho,\rho/c}: =\essinf \left\{\frac{f_1(t,u)}{\rho }%
:\;(t,u)\in [a,b]\times [\rho ,\rho /c]\right\}.
\end{equation*}{}
\end{enumerate}
Then we have $i_{K}(T, V_{\rho})=0.$
\end{lem}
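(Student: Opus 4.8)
The plan is to invoke part~(1) of Lemma~\ref{lemind} on the open set $V_\rho$. I fix an element $e\in K\setminus\{0\}$ (which exists, since $(C_6)$ gives $\psi_{1j}\in K\setminus\{0\}$) and show that $u\neq Tu+\lambda e$ for every $u\in\partial V_\rho$ and every $\lambda>0$; this forces $i_K(T,V_\rho)=0$. So suppose, towards a contradiction, that $u=Tu+\lambda e$ for some such $u,\lambda$. As $e\in K$ we have $\lambda e(t)\ge0$ for $t\in[a,b]$, hence $u(t)\ge Tu(t)$ there. Since $u\in\partial V_\rho$, the inclusions $K_\rho\subset V_\rho\subset K_{\rho/c}$ together with the cone condition of $(C_4)$ yield $\min_{t\in[a,b]}u(t)=\rho$ and $\rho\le u(t)\le\|u\|\le\rho/c$ for $t\in[a,b]$, so that $f_1(s,u(s))\ge\rho\,f_{1,\rho,\rho/c}$ for a.e.\ $s\in[a,b]$.

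Next I would derive a pointwise lower bound for $Tu$ on $[a,b]$. Fix $t\in[a,b]$; by $(C_2)$ we have $k(t,s)\ge c_1\Phi(s)\ge0$ for a.e.\ $s\in I$, so the integrand is nonnegative and the integral over $I$ dominates the one over $[a,b]$. Combining this with the two lower inequalities of $(C_4)$ (for $Bu$ and for $f$) gives
\[
Tu(t)\ge\sum_{j=1}^{n_1}\d_{1j}(t)\b_{1j}[u]+\int_a^b k(t,s)g(s)\Big(\sum_{j=1}^{m_1}\c_{1j}(s)\a_{1j}[u]+f_1(s,u(s))\Big)\,ds .
\]
The contribution of $f_1$ is $F_1u(t)$, and the boundary estimate above yields $F_1u(t)\ge\rho f_{1,\rho,\rho/c}\int_a^b k(t,s)g(s)\,ds$, which is precisely the free summand occurring in $(\mathrm{I}_\rho^0)$.

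The core step is to convert this into a vector estimate for $\varphi_1[u]$. Applying each functional $\varphi_{1i}$ to the displayed inequality and using the superadditivity~\eqref{superlin}, the integral inequality~\eqref{isuperlin}, the monotonicity in $(C_{10})$, and the nonnegativity $\a_{1j}[u],\b_{1j}[u]\ge0$ (valid since $u\in K$), I expect a componentwise bound of the form $\varphi_1[u]\ge c_1M_1\varphi_1[u]+\rho f_{1,\rho,\rho/c}\int_a^b\cK_{\varphi_1}(s)g(s)\,ds$, the factor $c_1$ being extracted from $k\ge c_1\Phi$ when the $[a,b]$–integrals produced here are compared with the full–range quantities $\til\c_{1j}$ and $\cK_{\varphi_1}$ defining $M_1$ and $(C_7)$. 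By $(C_8)$ we have $r(M_1)<1/c_1$, i.e.\ $r(c_1M_1)<1$, so $\Id-c_1M_1$ is invertible with nonnegative inverse $(\Id-c_1M_1)^{-1}=\sum_{k\ge0}(c_1M_1)^k$; since $\cK_{\varphi_1}\ge0$ this gives $\varphi_1[u]\ge\rho f_{1,\rho,\rho/c}(\Id-c_1M_1)^{-1}\int_a^b\cK_{\varphi_1}(s)g(s)\,ds$. Substituting this back into the pointwise inequality, identifying the coefficients as $\psi_{1j}(t)$ through $(C_6)$ and adding the $F_1u$ term, I obtain for $t\in[a,b]$
\[
Tu(t)\ge \rho\,f_{1,\rho,\rho/c}\Big(\sum_{j=1}^{m_1+n_1}\psi_{1j}(t)\big[(\Id-c_1M_1)^{-1}\!\int_a^b\cK_{\varphi_1}(s)g(s)\,ds\big]_j+\int_a^b k(t,s)g(s)\,ds\Big).
\]
Taking $\min_{t\in[a,b]}$ and invoking $(\mathrm{I}_\rho^0)$ gives $\min_{[a,b]}Tu>\rho$; but then $\rho=\min_{[a,b]}u\ge\min_{[a,b]}Tu>\rho$, the desired contradiction.

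The main obstacle is the functional/matrix step. The delicate points are: (i) extracting the factor $c_1$ from $k\ge c_1\Phi$ so that the coefficient matrix is $c_1M_1$ rather than $M_1$, thereby reconciling the $[a,b]$–integrals created by restricting the domain of integration with the full–range objects $\til\c_{1j}$ and $\cK_{\varphi_1}$ appearing in $M_1$ and in $(C_7)$; (ii) verifying that each function to which some $\varphi_{1i}$ is applied lies in $K$, so that \eqref{superlin}, \eqref{isuperlin} and the monotonicity of $(C_{10})$ are genuinely available; and (iii) justifying the positivity of $(\Id-c_1M_1)^{-1}$ from the spectral radius bound $r(M_1)<1/c_1$ in $(C_8)$. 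Once these are in place the remaining estimates are routine.
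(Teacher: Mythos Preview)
Your proposal follows essentially the same route as the paper: pick $e\in K\setminus\{0\}$, suppose $u=Tu+\lambda e$ on $\partial V_\rho$, bound $Tu$ from below on $[a,b]$, apply the functionals $\varphi_{1i}$ to obtain a vector inequality, invert $\Id-c_1M_1$ via $(C_8)$, feed the bound for $\varphi_1[u]$ back into the pointwise estimate, and take the infimum on $[a,b]$ to reach the contradiction $\rho>\rho$.

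The one point that needs correction is your explanation (i) of where the factor $c_1$ comes from. In the paper's argument the $c_1$ is \emph{not} extracted from the kernel inequality $k\ge c_1\Phi$ in order to reconcile $[a,b]$-integrals with the full-range objects $\tilde\gamma_{1j}$ and $\cK_{\varphi_1}$. Rather, once the pointwise bound \eqref{equaub2} is written with the $\psi_{1j}$ (note that for $t\in[a,b]$ one has $k(t,s)\ge0$, hence $|k(t,s)|=k(t,s)$ there) and one applies $\varphi_{1i}$ together with \eqref{superlin}, \eqref{isuperlin} and $(C_{10})$, the inequality $\varphi_1[u]\ge M_1\varphi_1[u]+\varphi_1[F_1u]$ follows with no $c_1$ at all. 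The factor $c_1$ is then inserted \emph{trivially}: since $c_1\in(0,1]$ and $M_1\varphi_1[u]\ge 0$, one has $M_1\varphi_1[u]\ge c_1M_1\varphi_1[u]$, whence $(\Id-c_1M_1)\varphi_1[u]\ge\varphi_1[F_1u]$. The only purpose of this weakening is to make the inversion possible under the hypothesis $r(M_1)<1/c_1$ of $(C_8)$ rather than the stronger condition $r(M_1)<1$. Likewise, $\cK_{\varphi_1}$ enters directly through \eqref{isuperlin}, with no $c_1$ correction needed. So your points (ii) and (iii) are the genuine technical checks, while the manoeuvre you describe in (i) is not the mechanism the paper uses and would not by itself produce the comparison you want.
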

\begin{proof}
 Take $e\in K\backslash\{0\}$ (for instance $e=\til\gamma_{21}$).
We will show that $u \neq Tu +{\lambda}e$ for all
${\lambda} \geq 0$
 and $u \in \partial V_{\rho}$ which implies that $i_{K}(T, V_{\rho})=0$.
In fact, if this does not happen, there are $u \in \partial V_{\rho}$ (and so  we have $\min_{t \in [a,b]} {u(t)}=\rho$ and $\rho
\leq u(t) \leq \rho/c$  for all $t \in [a,b]$), and ${\lambda} \geq 0$ with
\begin{equation*}
u(t)=Tu(t)+{\lambda}e.
\end{equation*}
Therefore, for $t\in[a,b]$, by $(C_2)$, $(C_4) - (C_6)$ and Remark \ref{remPK}, we have
\begin{equation}\label{equaub2} u(t)\ge\sum_{j=1}^{m_1+n_1}\psi_{1j}(t)\varphi_{1j}[u]+F_1u(t)+\l \,e(t).
\end{equation}
Thus, using again $(C_6)$, $(C_7)$ and $(C_{10})$ together with \eqref{superlin}, we obtain
\begin{displaymath}\varphi_{1i}[u]\ge\sum_{j=1}^{m_1+n_1}\varphi_{1i}[\psi_{1j}]\varphi_{1j}[u]+\varphi_{1i}[F_1u]+\l\varphi_{1i}[e]\ge
c_1 \, \left(\sum_{j=1}^{m_1+n_1}\varphi_{1i}[\psi_{1j}]\varphi_{1j}[u]+\varphi_{1i}[F_1u]\right),\end{displaymath}
which, expressed in matrix notation, is
\begin{displaymath}\varphi_1[u]\ge c_1 \, \left(M_1\varphi_1[u]+\varphi_1[F_1u]\right).\end{displaymath}
Hence we get
\begin{displaymath}(\Id-c_1 M_1)\varphi_1[u]\ge\varphi_1[F_1u].\end{displaymath}
Since $r(M_1)<1/c_1$, $\Id-c_1\,M_1$ is invertible and \[(\Id-c_1\,M_1)^{-1}=\sum_{k=0}^\infty \left({c_1\,M_1}\right)^k,\] so $(\Id-c_1 M_1)^{-1}$ is positive and hence
\begin{equation}\label{sold2b}\varphi_1[u]\ge(\Id-c_1\, M_1)^{-1}\varphi_1[F_1u].\end{equation}
Therefore, from \eqref{equaub2} and \eqref{sold2b} we obtain, using \eqref{isublin}, for $t\in[a,b]$,
\begin{align*}u(t)\ge & \sum_{j=1}^{m_1+n_1}\psi_{1j}(t)\varphi_{1j}[u]+F_1u(t)\ge\sum_{j=1}^{m_1+n_1}\psi_{1j}(t)\left[(\Id-c_1\,M_1)^{-1}\varphi_1[F_1u]\right]_j+F_1u(t)\\
\ge &\rho f_{1,\rho,\rho/c}\inf_{t\in [a,b]}\(\sum_{j=1}^{m_1+n_1}\psi_{1j}(t)\left[(\Id-c_1\,M_1)^{-1}\int_
a^b\cK_{\varphi_1}(s)g(s)ds\right]_j+\int_a^bk(t,s)g(s)ds\).
\end{align*}
Taking the infimum on $t\in [a,b]$, gives
\begin{displaymath}\rho\ge\rho f_{1,\rho,\rho/c}\inf_{t\in [a,b]}\(\sum_{j=1}^{m_1+n_1}\psi_{1j}(t)\left[(\Id-c_1\,M_1)^{-1}\int_a^b\cK_{\varphi_1}(s)g(s)ds\right]_j+\int_a^bk(t,s)g(s)ds\).\end{displaymath}
which contradicts the hypothesis.
 \end{proof}
\begin{rem}
%\label{easyidx0}
We point out, in similar way as in \cite{jwgi-nodea-08}, that a stronger (but easier to check) condition than $(\mathrm{I}_{\protect\rho }^{0})$ is given by the following.
\begin{equation*}
%\label{eqind0b}
f_{1,\rho,\rho/c}\(\inf_{t\in [a,b]}\sum_{j=1}^{m_1+n_1}\psi_{1j}(t)\left[(\Id-c_1M_1)^{-1}\int_a^b\cK_{\varphi_1}(s)g(s)ds\right]_j+\frac{1}{M(a,b)}\)>1,
\end{equation*}{}
where
\begin{equation}\label{Mdef} 
\frac{1}{M(a,b)} :=\inf_{t\in [a,b]}\int_{a}^{b}k(t,s)g(s)\,ds.
\end{equation}
\end{rem}
The results above can be used in order to prove the existence of at least one, two or three nontrivial solutions. We omit the proof  which follows from the properties of the fixed point index. We note that, by expanding the lists in conditions $(S_{5}),(S_{6})$ below, it is
possible to state results for four or more nontrivial solutions, see for
example the paper by Lan~\cite{kljdeds} for the type of results that might be stated.

\begin{thm}\label{thmcasesS}Assume conditions $(C_1)-(C_{10})$ are satisfied. The integral equation \eqref{eqthamm} has at least one non-zero solution
in $K$ if one of the following conditions hold.
\begin{enumerate}
\item[$(S_{1})$] There exist $\rho _{1},\rho _{2}\in (0,\infty )$ with $\rho
_{1}/c<\rho _{2}$ such that $(\mathrm{I}_{\rho _{1}}^{0})$ and $(\mathrm{I}_{\rho _{2}}^{1})$ hold.
\item[$(S_{2})$] There exist $\rho _{1},\rho _{2}\in (0,\infty )$ with $\rho
_{1}<\rho _{2}$ such that $(\mathrm{I}_{\rho _{1}}^{1})$ and $(\mathrm{I}%
_{\rho _{2}}^{0})$ hold.
\end{enumerate}
The integral equation \eqref{eqthamm} has at least two non-zero solutions in $K$ if one of
the following conditions hold.
\begin{enumerate}
\item[$(S_{3})$] There exist $\rho _{1},\rho _{2},\rho _{3}\in (0,\infty )$
with $\rho _{1}/c<\rho _{2}<\rho _{3}$ such that $(\mathrm{I}_{\rho
_{1}}^{0}),$ $(
\mathrm{I}_{\rho _{2}}^{1})$ $\text{and}\;\;(\mathrm{I}_{\rho _{3}}^{0})$
hold.
\item[$(S_{4})$] There exist $\rho _{1},\rho _{2},\rho _{3}\in (0,\infty )$
with $\rho _{1}<\rho _{2}$ and $\rho _{2}/c<\rho _{3}$ such that $(\mathrm{I}%
_{\rho _{1}}^{1}),\;\;(\mathrm{I}_{\rho _{2}}^{0})$ $\text{and}\;\;(\mathrm{I%
}_{\rho _{3}}^{1})$ hold.
\end{enumerate}
The integral equation \eqref{eqthamm} has at least three non-zero solutions in $K$ if one
of the following conditions hold.
\begin{enumerate}
\item[$(S_{5})$] There exist $\rho _{1},\rho _{2},\rho _{3},\rho _{4}\in
(0,\infty )$ with $\rho _{1}/c<\rho _{2}<\rho _{3}$ and $\rho _{3}/c<\rho
_{4}$ such that $(\mathrm{I}_{\rho _{1}}^{0}),$ $(\mathrm{I}_{\rho _{2}}^{1}),\;\;(\mathrm{I}%
_{\rho _{3}}^{0})\;\;\text{and}\;\;(\mathrm{I}_{\rho _{4}}^{1})$ hold.
\item[$(S_{6})$] There exist $\rho _{1},\rho _{2},\rho _{3},\rho _{4}\in
(0,\infty )$ with $\rho _{1}<\rho _{2}$ and $\rho _{2}/c<\rho _{3}<\rho _{4}$
such that $(\mathrm{I}_{\rho 
_{1}}^{1}),\;\;(\mathrm{I}_{\rho
_{2}}^{0}),\;\;(\mathrm{I}_{\rho _{3}}^{1})$ $\text{and}\;\;(\mathrm{I}%
_{\rho _{4}}^{0})$ hold.
\end{enumerate}
\end{thm}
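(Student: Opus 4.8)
The plan is to combine the two index computations of the preceding Lemmas with the excision/additivity properties collected in Lemma~\ref{lemind}, exploiting throughout the chain of inclusions $K_{\rho}\subset V_{\rho}\subset K_{\rho/c}$. Recall that condition $(\mathrm{I}_{\rho}^{1})$ yields $i_{K}(T,K_{\rho})=1$ by Lemma~\ref{thmind1}, while condition $(\mathrm{I}_{\rho}^{0})$ yields $i_{K}(T,V_{\rho})=0$ by the subsequent Lemma. Every existence assertion will then follow from part~(4) of Lemma~\ref{lemind}, which produces a fixed point in the set-theoretic difference of two nested relatively open subsets of $K$ whose indices differ.

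First I would dispatch the single-solution cases. Under $(S_{1})$ the gap $\rho_{1}/c<\rho_{2}$ gives, through $V_{\rho_{1}}\subset K_{\rho_{1}/c}$, the inclusion $\overline{V_{\rho_{1}}}\subset K_{\rho_{2}}$; since $i_{K}(T,V_{\rho_{1}})=0$ and $i_{K}(T,K_{\rho_{2}})=1$, part~(4) of Lemma~\ref{lemind} supplies a fixed point $u$ in $K_{\rho_{2}}\setminus\overline{V_{\rho_{1}}}$. Because $u\notin\overline{V_{\rho_{1}}}$ we have $\min_{t\in[a,b]}u(t)>\rho_{1}>0$, so $u\neq 0$. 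Case $(S_{2})$ is symmetric: from $\rho_{1}<\rho_{2}$ one obtains $\overline{K_{\rho_{1}}}\subset V_{\rho_{2}}$ (since $\min_{t\in[a,b]}u(t)\le\|u\|\le\rho_{1}<\rho_{2}$), and as the indices are $1$ on $K_{\rho_{1}}$ and $0$ on $V_{\rho_{2}}$, part~(4) yields a fixed point in $V_{\rho_{2}}\setminus\overline{K_{\rho_{1}}}$, which is non-zero because $\|u\|>\rho_{1}$.

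The multiplicity cases are obtained by iterating this mechanism on consecutive nested pairs. Under $(S_{3})$, with $\rho_{1}/c<\rho_{2}<\rho_{3}$, I would apply part~(4) twice: once to the pair $(V_{\rho_{1}},K_{\rho_{2}})$, with indices $0$ and $1$, to get a solution in $K_{\rho_{2}}\setminus\overline{V_{\rho_{1}}}$, and once to the pair $(K_{\rho_{2}},V_{\rho_{3}})$, with indices $1$ and $0$, to get a solution in $V_{\rho_{3}}\setminus\overline{K_{\rho_{2}}}$. These two annular regions are disjoint, so the solutions are distinct, and both are non-zero by the argument above. Cases $(S_{4})$, $(S_{5})$ and $(S_{6})$ proceed identically, stacking three or four sets with alternating indices by means of the relevant inclusions: each gap of the form $\rho/c<\rho'$ guarantees $\overline{V_{\rho}}\subset K_{\rho'}$, while each gap $\rho<\rho'$ guarantees $\overline{K_{\rho}}\subset V_{\rho'}$, and the resulting two or three annuli are pairwise disjoint.

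Since the nontrivial analytic content has already been discharged in the two index Lemmas, no genuine obstacle remains. The only points demanding care are the verification of the inclusions $\overline{V_{\rho}}\subset K_{\rho'}$ and $\overline{K_{\rho}}\subset V_{\rho'}$ under the stated gap hypotheses, and the observation that consecutive annuli are disjoint and exclude $0$; together these ensure that all solutions produced are nontrivial and mutually distinct. This is precisely why the authors can legitimately omit the formal write-up as a routine consequence of the fixed point index.
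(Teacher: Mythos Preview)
Your proposal is correct and follows precisely the route the paper intends: the authors explicitly omit the proof, stating that it ``follows from the properties of the fixed point index,'' and your argument supplies exactly those details by combining Lemma~\ref{thmind1}, the index-zero Lemma, the nesting $K_{\rho}\subset V_{\rho}\subset K_{\rho/c}$, and part~(4) of Lemma~\ref{lemind}. There is nothing to add.
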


\subsection{Non-existence results}
For this epigraph we will assume that the operators $\varphi_{ij}$ are \textit{linearly bounded} i.\,e., an operator $A:X\to Y$ between two normed spaces $X$ and $Y$ is linearly bounded if there exists $M\in\bR^+$ such that $\|Ax\|\le M\|x\|$ for every $x\in\ X$. We define the \textit{norm} of $A$ as $\|A\|:=\inf\{M\in\bR^+\ :\  \|Ax\|\le M\|x\|,\ x\in X\}$. Observe that for linear operators this is the usual norm. We denote by $\LB(X,Y)$ the space of linearly bounded operators from $X$ to $Y$ (and by $\LB(X)$ if $X=Y$). \par
We now offer some non-existence results for the integral equation~\eqref{eqthamm}.

\begin{thm} Assume conditions $(C_1)-(C_5)$ are satisfied. Let $m$  be as in \eqref{mdef} and $M(a,b)$ as in \eqref{Mdef}. If one of the following conditions holds, 
\begin{enumerate}
\item $f_2(t,u)<m\(1-\sum_{j=1}^{m_2+n_2}\|\psi_{2j}\|\|\varphi_{2j}\|\)|u|$, for every $t\in I$ and $u\in\bR\backslash\{0\}$,
\item $f_1(t,u)>M(a,b)\,u$ for every $t\in [a,b]$ and $u\in\bR^+$,
\end{enumerate}
then there is no non-trivial solution of the integral equation~\eqref{eqthamm} in $K$.
\end{thm}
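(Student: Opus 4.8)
The plan is to argue by contradiction in both cases: assume $u\in K\setminus\{0\}$ solves $u=Tu$, and derive in case (1) the impossibility $\|u\|<\|u\|$ and in case (2) the impossibility $\min_{[a,b]}u<\min_{[a,b]}u$. The only tools I expect to need are the four inequalities packaged in $(C_4)$, the Carath\'eodory bound $(C_5)$, the nonnegativity $\alpha_{ij}[u],\beta_{ij}[u]\ge0$ built into the cone $K$, the linear boundedness of the $\varphi_{ij}$ (so that $\varphi_{2j}[u]\le\|\varphi_{2j}\|\,\|u\|$), and the two constants $m$ and $M(a,b)$ from \eqref{mdef} and \eqref{Mdef}.

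For case (1), I would set $S:=\sum_{j=1}^{m_2+n_2}\|\psi_{2j}\|\,\|\varphi_{2j}\|$ and $C:=m(1-S)$, first noting that the hypothesis is consistent with a nonnegative $f_2$ only when $C>0$ (equivalently $S<1$), since otherwise $f_2(t,u)<C|u|\le0$ is impossible. Writing $\rho:=\|u\|>0$, the pointwise estimate
\[
|u(t)|=|Tu(t)|\le\sum_{j=1}^{m_2+n_2}|\psi_{2j}(t)|\,\varphi_{2j}[u]+\int_0^1|k(t,s)|g(s)f_2(s,u(s))\,ds
\]
follows from the third and fourth inequalities of $(C_4)$ together with $\tilde\gamma_{2j}(t)=\int_0^1|k(t,s)|g(s)\gamma_{2j}(s)\,ds$. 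The crucial strict step is that, combining the hypothesis with $|u(s)|\le\rho$, one has $f_2(s,u(s))<C\rho$ for \emph{every} $s\in I$ (at $u(s)=0$ because $f_2(s,0)=0<C\rho$, and at $u(s)\ne0$ because $f_2(s,u(s))<C|u(s)|\le C\rho$). Evaluating at a point $t_0$ with $|u(t_0)|=\rho$ and splitting on whether $\sigma(t_0)>0$ or $\sigma(t_0)=0$ then forces $\rho<\rho$: when $\sigma(t_0)>0$ the integral is $<C\rho\,\sigma(t_0)\le C\rho/m$, because a strictly pointwise bound integrated against a weight of positive total mass is strict, and $|\psi_{2j}(t_0)|\varphi_{2j}[u]\le\|\psi_{2j}\|\,\|\varphi_{2j}\|\,\rho$ produces the term $S\rho$, so $\rho<S\rho+C\rho/m=\rho$; when $\sigma(t_0)=0$ the integral vanishes and $\rho\le S\rho<\rho$ using $S<1$.

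For case (2), let $m_u:=\min_{t\in[a,b]}u(t)\ge c\|u\|>0$. Here I would use the two lower estimates of $(C_4)$, the positivity $k(t,s)\ge c_1\Phi(s)\ge0$ for $t\in[a,b]$, and $f\ge0$ to obtain, for $t\in[a,b]$,
\[
u(t)=Bu(t)+\int_0^1 k(t,s)g(s)f(s,u(s),Du(s))\,ds\ge\int_a^b k(t,s)g(s)f_1(s,u(s))\,ds+R(t),
\]
where $R(t)$ collects the terms $\sum_j\delta_{1j}(t)\beta_{1j}[u]$ and $\sum_j\alpha_{1j}[u]\int_a^b k(t,s)g(s)\gamma_{1j}(s)\,ds$. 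Applying the hypothesis $f_1(s,u(s))>M(a,b)\,u(s)\ge M(a,b)\,m_u$ on $[a,b]$ and then the defining infimum $\int_a^b k(t,s)g(s)\,ds\ge1/M(a,b)$, the integral term alone exceeds $m_u$ strictly (strictness surviving because $\int_a^b k(t,s)g(s)\,ds\ge c_1\int_a^b\Phi g>0$ by $(C_3)$); hence $u(t)>m_u$ for every $t\in[a,b]$ provided $R(t)\ge0$, contradicting that the minimum $m_u$ is attained.

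I expect the main obstacle to be the bookkeeping of strictness and signs rather than any deep idea. In case (1) one must check that the strict pointwise inequality on $f_2$ genuinely upgrades to a strict integral inequality, which is handled by the uniform bound $f_2(s,u(s))<C\rho$ together with the dichotomy on $\sigma(t_0)$. In case (2) the delicate point is justifying $R(t)\ge0$ on $[a,b]$ for $u\in K$: the nonnegative functional coefficients $\beta_{1j}[u],\alpha_{1j}[u]$ must be paired with nonnegative weights $\delta_{1j}(t)$ and $\int_a^b k(t,s)g(s)\gamma_{1j}(s)\,ds$ on $[a,b]$, which is exactly where the cone and sign structure underlying $(C_4)$ (and implicitly $(C_6)$) has to be invoked.
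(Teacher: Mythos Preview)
Your argument is correct and follows essentially the same route as the paper: a contradiction at a point $t_0$ maximizing $|u|$ for case~(1) and at a point $t_0$ minimizing $u$ on $[a,b]$ for case~(2), combining the upper/lower inequalities of $(C_4)$ with the definitions of $m$ and $M(a,b)$. If anything you are more careful than the paper on two points---the $\sigma(t_0)=0$ dichotomy needed to keep the integral inequality strict in~(1), and the observation that dropping the ``$R(t)$'' (equivalently $\sum_j\psi_{1j}(t_0)\varphi_{1j}[u]$) term in~(2) tacitly uses the sign information from $(C_6)$ and the cone, which the theorem statement does not list among its hypotheses.
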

\begin{proof}
$(1)$ Assume, on the contrary, that there exists $u\in K$, $u\not\equiv 0$ such that $u=Tu$ and let $t_0\in I$ such that $\|u\|=|u(t_0)|$. Then we have
\begin{align*}  \|u\|  = &  |u(t_0)| \\ \le & \sum_{j=1}^{m_2+n_2}\|\psi_{2j}\|\varphi_{2j}[u]+\int_{0}^{1}|k(t_0,s)|g(s)f_2(s,u(s))\,ds \\ < & \sum_{j=1}^{m_2+n_2}\|\psi_{2j}\|\|\varphi_{2j}\|\|u\|+\int_{0}^{1}|k(t_0,s)|g(s)\,ds\,m\(1-\sum_{j=1}^{m_2+n_2}\|\psi_{2j}\|\|\varphi_{2j}\|\)\|u\|\\
\le & \sum_{j=1}^{m_2+n_2}\|\psi_{2j}\|\|\varphi_{2j}\|\|u\|+\(1-\sum_{j=1}^{m_2+n_2}\|\psi_{2j}\|\|\varphi_{2j}\|\)\|u\|=\|u\|,
\end{align*}
a contradiction, thus there is no non-trivial solution  of the integral equation~\eqref{eqthamm} in $K$.\par
$(2)$ Assume, on the contrary, that there exists $u\in K$, $u\not\equiv 0$ such that $u=Tu$ and let $t_0\in I$ such that $u(t_0)=\min_{t\in[a,b]}u(t)$. Then,
\begin{align*}u(t_0)=Tu(t_0)\ge & \sum_{j=1}^{m_2+n_2}\psi_{1j}(t_0)\varphi_{1j}[u]+ \int_0^1k(t_0,s)g(s)f_1(s,u(s))ds \\> & \int_a^bk(t_0,s)g(s)M(a,b)u(s)ds\\ \ge &  M(a,b)u(t_0)\int_a^bk(t_0,s)g(s)ds\ge u(t_0),
\end{align*}
a contradiction. Thus there is no non-trivial solution of the integral equation~\eqref{eqthamm} in $K$.
\end{proof}

\section{The spectral radius and the existence of multiple solutions}

In order to prove the results that follow we make use of different requirements on the functionals $\varphi_{ij}$ than being linearly bounded. We introduce now some definitions, see~\cite{Dol1,Dol2}.\par
For operators $A\in\LB(X)$ we can define the \textit{spectral radius} of $A$ as $r(A)=\lim\limits_{n\to\infty}\|A^n\|^\frac{1}{n}$. We define the \textit{principal characteristic value} as $\mu(A):=1/r(A)$. For more properties of this generalized spectral value we refer the reader to \cite{Bug,Zim}.\par
 Let $(X,\|\cdot\|_X),(Y,\|\cdot\|_Y)$ be real normed spaces. Let $\Lip(X,Y)$ 
be the set of operators from $X$ to $Y$ that satisfy the \textit{Lipschitz property}, that is,
\begin{displaymath}\Lip(X,Y):=\{N:X\to Y\ :\ \exists M\in\bR^+, \|Nx-Ny\|_Y\le M\|x-y\|_X,\ x,y\in X\}.\end{displaymath}
Define the function
\begin{displaymath}\|N\|^*:=\inf\{M\in\bR^+\ :\ \|Nx-Ny\|_Y\le M\|x-y\|_X,\ x,y\in X\},\ N\in\Lip(X,Y).\end{displaymath}
We denote by $\Lip(X)\equiv\Lip(X,X)$. $\Lip(X,Y)$ is a real vector space and $\|\cdot\|^*$ is a seminorm on $\Lip(X,Y)$ (in fact, $(\|\cdot\|^*)^{-1}(\{0\})=\bR$). Also, observe that
\begin{displaymath}\|N-N(0)\|=\sup_{\substack{x\in X,\\ x\ne 0}}\frac{\|N(x)-N(0)\|_Y}{\|x\|_X}\le \sup_{\substack{x,y\in X,\\ x\ne y}}\frac{\|N(x)-N(y)\|_Y}{\|x-y\|_X}=\|N\|^*,\end{displaymath}
thus, in particular, $N-N(0)$ is linearly bounded for every $N\in\Lip(X,Y)$. On the other hand if $N(0)\ne 0$, $N$ is not linearly bounded, for the definition of linearly bounded operators implies that they vanish at zero. With these considerations in mind we can define then
\begin{displaymath}\Lip_0(X,Y):=\Lip(X,Y)\cap\LB(X,Y)=\{N\in\Lip(X,Y)\ :\ N(0)=0\}.\end{displaymath} Note that
$\|\cdot\|^*$ is a norm on $\Lip_0(X,Y)$.\par

The following Theorems from \cite{Dol2} characterize invertibility of the operators between $X$ and $Y$.
\begin{thm} \cite[Theorem 1]{Dol2}
Let $X$ a real normed space and $Y$ a real Banach space. Let $N:X\to Y$ be an operator. Then $N$ is invertible if and only if there exists an invertible operator $J:Y\to X$ such that $(N-J)J^{-1}\in\Lip(Y)$ and $\|(N-J)J^{-1}\|^*<1$.
\end{thm}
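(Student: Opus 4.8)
The plan is to reduce the invertibility of $N$ to that of a single nonlinear self-map of $Y$ via the algebraic factorization $N=(\Id_Y+\Phi)\circ J$, where $\Phi:=(N-J)J^{-1}$ (here $J$ carries $X$ to $Y$, so that $J^{-1}:Y\to X$ and $\Phi:Y\to Y$, making every composition well defined). First I would dispose of the ``only if'' direction, which is immediate: if $N$ is invertible one simply takes $J=N$, so that $(N-J)J^{-1}=0\in\Lip(Y)$ with $\|0\|^*=0<1$. For the ``if'' direction I would start from the hypothesised invertible $J$ and verify the factorization: since $J^{-1}\circ J=\Id_X$, one has $\Phi\circ J=(N-J)\circ J^{-1}\circ J=N-J$, whence $(\Id_Y+\Phi)\circ J=J+(N-J)=N$. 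As $J$ is invertible, $N$ is invertible precisely when $\Id_Y+\Phi$ is invertible on $Y$, so the whole problem is transported to $Y$.

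The core step is to prove that $\Id_Y+\Phi$ is a bijection of $Y$, and here is where completeness of $Y$ enters. Fixing $z\in Y$, solving $(\Id_Y+\Phi)(y)=z$ is the same as finding a fixed point of the map $G_z(y):=z-\Phi(y)$. Since $\Phi\in\Lip(Y)$ with $\|\Phi\|^*<1$, $G_z$ is a contraction with constant $\|\Phi\|^*$, so the Banach fixed point theorem yields a unique $y\in Y$ with $G_z(y)=y$; existence gives surjectivity and uniqueness gives injectivity, so $\Id_Y+\Phi$ is bijective.

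It remains to check that the inverse is of the right type and to reassemble $N^{-1}$. Writing $(\Id_Y+\Phi)(y_i)=z_i$ for $i=1,2$ and using the Lipschitz bound on $\Phi$, one obtains $\|y_1-y_2\|\le\|z_1-z_2\|+\|\Phi\|^*\|y_1-y_2\|$, hence $(1-\|\Phi\|^*)\|y_1-y_2\|\le\|z_1-z_2\|$; thus $(\Id_Y+\Phi)^{-1}\in\Lip(Y)$ with $\|(\Id_Y+\Phi)^{-1}\|^*\le(1-\|\Phi\|^*)^{-1}$. Finally $N^{-1}=J^{-1}\circ(\Id_Y+\Phi)^{-1}$ is a composition of invertible operators, so $N$ is invertible. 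Rather than any deep obstacle, the only delicate points are bookkeeping ones proper to the nonlinear setting: one must justify the cancellations $J^{-1}\circ J=\Id_X$ and $\Phi\circ J=N-J$ at the level of (possibly nonlinear) maps instead of invoking linear algebra, and keep careful track of the domains and codomains of $N$, $J$, $J^{-1}$ and $\Phi$ so that each composition is meaningful.
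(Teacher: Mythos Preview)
The paper does not supply its own proof of this statement; it is quoted from \cite{Dol2} without argument, so there is nothing in the paper to compare against. Your proof is correct and is the expected one: the ``only if'' direction is disposed of by taking $J=N$, and for the ``if'' direction the factorization $N=(\Id_Y+\Phi)\circ J$ with $\Phi=(N-J)J^{-1}$ reduces invertibility of $N$ to that of $\Id_Y+\Phi$, which follows from the Banach fixed-point theorem applied to $y\mapsto z-\Phi(y)$ since $\|\Phi\|^*<1$ and $Y$ is complete. The Lipschitz estimate you obtain for $(\Id_Y+\Phi)^{-1}$ is not needed for this theorem (it is the content of the next cited result), but including it does no harm. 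One cosmetic remark: the paper writes $J:Y\to X$, under which $N-J$ would be ill-typed; you silently corrected this to $J:X\to Y$, which is the only reading making $(N-J)J^{-1}\in\Lip(Y)$ well defined.
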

\begin{thm} \cite[Theorem 2]{Dol2}
Let $X$ a real normed space and $Y$ a real Banach space. Let $N:X\to Y$ be an operator. Then $N$ is invertible and $N\in\Lip(X,Y)$, if and only if there exists an invertible operator $J:Y\to X$ with inverse $J^{-1}\in\Lip(X,Y)$ such that $(N-J)J^{-1}\in\Lip(Y)$ and $\|(N-J)J^{-1}\|^*<1$.\par In such a case, $\|N^{-1}\|^*\le\|J^{-1}\|^*/(1-\|(N-J)J^{-1}\|^*)$.
\end{thm}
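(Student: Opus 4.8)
The plan is to read Theorem~2 as a quantitative sharpening of the preceding theorem, the genuinely new content being the control of the Lipschitz seminorm of the inverse. The pivotal observation is that composing $N$ with $J^{-1}$ reduces the whole question to a perturbation of the identity: setting $S:=(N-J)J^{-1}$ one has the purely algebraic identity $NJ^{-1}=\Id_Y+S$, equivalently $N=(\Id_Y+S)J$. Inverting $N$ therefore amounts to inverting $\Id_Y+S$ on the Banach space $Y$ and composing with $J^{-1}$, and every seminorm estimate will be extracted from this factorization together with the submultiplicativity $\|A\circ B\|^*\le\|A\|^*\,\|B\|^*$, which is immediate from the definition of $\|\cdot\|^*$ since $\|A(Bx)-A(By)\|\le\|A\|^*\|Bx-By\|\le\|A\|^*\|B\|^*\|x-y\|$.

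For the implication from the existence of $J$ to the invertibility of $N$, I would first invoke the preceding theorem, whose hypotheses are contained in those assumed here, to obtain that $N$ is invertible (alternatively, invertibility drops out of the factorization below once $\Id_Y+S$ is shown bijective). The core step is then to prove that $\Id_Y+S$ is bi-Lipschitz with $\|(\Id_Y+S)^{-1}\|^*\le(1-\|S\|^*)^{-1}$. Surjectivity comes from the contraction mapping theorem: for fixed $w\in Y$ the map $z\mapsto w-Sz$ has Lipschitz constant $\|S\|^*<1$ on the complete space $Y$, so it admits a unique fixed point, which solves $(\Id_Y+S)z=w$. The reverse-triangle estimate $\|(\Id_Y+S)z_1-(\Id_Y+S)z_2\|\ge(1-\|S\|^*)\|z_1-z_2\|$ then yields injectivity and the stated bound on the inverse seminorm.

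Combining these, since $N^{-1}=J^{-1}(\Id_Y+S)^{-1}$ and $J^{-1}\in\Lip$ by hypothesis, submultiplicativity of $\|\cdot\|^*$ under composition gives at once that $N^{-1}\in\Lip$ together with the announced estimate $\|N^{-1}\|^*\le\|J^{-1}\|^*\,\|(\Id_Y+S)^{-1}\|^*\le\|J^{-1}\|^*/(1-\|(N-J)J^{-1}\|^*)$. For the converse implication the natural candidate is $J:=N$: then $(N-J)J^{-1}$ is the zero operator, which lies in $\Lip(Y)$ with seminorm $0<1$, while $J^{-1}=N^{-1}$ is Lipschitz precisely because that is the hypothesis on the left-hand side; this produces the required $J$.

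The step I expect to be the main obstacle is the bi-Lipschitz inversion of $\Id_Y+S$ with the sharp constant $(1-\|S\|^*)^{-1}$, since this is exactly where completeness of $Y$ is used, through the convergence of the fixed-point iteration, and where one must work with the seminorm $\|\cdot\|^*$ rather than an operator norm, because $S$ need not vanish at the origin. The remaining ingredients, namely submultiplicativity under composition and the reverse-triangle inequality, are routine and carry only the bookkeeping of constants.
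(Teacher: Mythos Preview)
The paper does not prove this statement: it is quoted verbatim from Dolezal \cite{Dol2} and used as a black box, so there is no in-paper argument to compare against. Your proposal is the standard proof and is correct. The factorization $N=(\Id_Y+S)J$ with $S=(N-J)J^{-1}$, the Banach fixed-point inversion of $\Id_Y+S$ on the complete space $Y$, the reverse-triangle bound $\|(\Id_Y+S)^{-1}\|^*\le(1-\|S\|^*)^{-1}$, and submultiplicativity of $\|\cdot\|^*$ under composition are exactly the ingredients one needs, and your identification of the fixed-point step as the place where completeness of $Y$ enters is on point.

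One small remark on the converse: you write that $J^{-1}=N^{-1}$ is Lipschitz ``precisely because that is the hypothesis on the left-hand side'', but as transcribed in the paper the left-hand side reads $N\in\Lip(X,Y)$, not $N^{-1}\in\Lip(Y,X)$. This is almost certainly a typo in the paper's restatement (note also that $J$ is declared to map $Y\to X$, which would make $N-J$ ill-defined); the bound $\|N^{-1}\|^*\le\dots$ in the conclusion, together with the corollary that follows, makes clear that the intended hypothesis is that $N^{-1}$ is Lipschitz, and with that reading your argument for both directions is complete.
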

The following consequence (in the line of \cite[Corollary 2]{Dol1}) can be obtained by taking $X=Y$, $N=\Id-Q$, $J=\Id$.
\begin{cor}
Let $X$ be a real Banach space and $Q\in\Lip(X)$ such that $\|Q\|^*<1$. Then $\Id-Q$ is an invertible operator and $\|(\Id-Q)^{-1}\|^*\le1/(1-\|Q\|^*)$.
\end{cor}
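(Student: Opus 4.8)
The plan is to obtain this as a direct specialization of Theorem 2 (from \cite{Dol2}), using the substitution indicated just before the statement: $X=Y$, $N=\Id-Q$, and $J=\Id$. First I would check that the hypotheses of Theorem 2 hold under this choice. Since $X$ is a real Banach space, it simultaneously plays the role of the real normed space and of the real Banach space required by the theorem. The operator $J=\Id$ is trivially invertible, and its inverse $J^{-1}=\Id$ lies in $\Lip(X)$ because the identity is $1$-Lipschitz; in particular $\|J^{-1}\|^*=\|\Id\|^*=1$.

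Next I would identify the composite operator $(N-J)J^{-1}$ appearing in Theorem 2. Here $N-J=(\Id-Q)-\Id=-Q$, and composing with $J^{-1}=\Id$ leaves $(N-J)J^{-1}=-Q$. Since $Q\in\Lip(X)$, so is $-Q$, and its Lipschitz seminorm is unchanged under the sign, as $\|(-Q)x-(-Q)y\|=\|Qx-Qy\|$ for all $x,y\in X$; hence $\|(N-J)J^{-1}\|^*=\|Q\|^*$. Consequently the theorem's smallness requirement $\|(N-J)J^{-1}\|^*<1$ reduces exactly to the hypothesis $\|Q\|^*<1$. With all conditions verified, Theorem 2 delivers that $N=\Id-Q$ is invertible (and belongs to $\Lip(X)$) together with the estimate $\|N^{-1}\|^*\le\|J^{-1}\|^*/(1-\|(N-J)J^{-1}\|^*)$. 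Substituting $\|J^{-1}\|^*=1$ and $\|(N-J)J^{-1}\|^*=\|Q\|^*$ yields precisely $\|(\Id-Q)^{-1}\|^*\le 1/(1-\|Q\|^*)$.

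There is no genuinely hard step: the entire content is the recognition that the stated substitution collapses Theorem 2 onto the desired conclusion, so the argument is a short verification rather than a new proof. The only point meriting a brief remark is the evaluation $\|\Id\|^*=1$, which holds for any nontrivial $X$ since picking $x\neq y$ gives Lipschitz quotient $\|x-y\|/\|x-y\|=1$; the degenerate case $X=\{0\}$ is trivial and may be dismissed in one line.
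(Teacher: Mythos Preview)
Your proposal is correct and follows exactly the approach indicated in the paper: the corollary is stated there as an immediate consequence of Theorem~2 of \cite{Dol2} via the substitution $X=Y$, $N=\Id-Q$, $J=\Id$, and you have simply spelled out the routine verification of the hypotheses and the resulting estimate.
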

\begin{rem}\label{rangerem}
Assume $Q\in\Lip(X)$, $Q(X)$ closed for the sum, $\|Q\|^*<1$. Then \begin{displaymath}(\Id-Q)^{-1}|_{Q(X)}:Q(X)\to Q(X).\end{displaymath} 

To see this take $x\in X$ and define $y=(\Id-Q)^{-1}Qx$. Then $y=Qx+Qy\in Q(X)$.
\end{rem}
We now present a result which is a straightforward generalization to the case of linearly bounded operators of a classical result on linear operators.

Let us define the following operators and constants from the functions defined in conditions $(C_{1})$-$(C_{10})$.
\begin{align*}H_1u(t):= & \sum_{j=1}^{m_1+n_1}\psi_{1j}(t)\varphi_{1j}[u], \\
L_2u(t):= & \int_0^1|k(t,s)|g(s)u(s)ds,\ H_2u(t):= \sum_{j=1}^{m_2+n_2}|\psi_{2j}(t)||\varphi_{2j}[u]|,\\
f_2^0:=& \lils{u\to0}\esssup\limits_{t\in I}\frac{f_2(t,u)}{|u|},\ f_{1,0}:= \lili{u\to0^+}\essinf_{t\in [a,b]}\frac{f_1(t,u)}{u},
\\
f_2^\infty:=& \lils{|u|\to\infty}\esssup_{t\in I}\frac{f_2(t,u)}{|u|},\ f_{1,\infty}:= \lili{u\to\infty}\essinf_{t\in [a,b]}\frac{f_1(t,u)}{u}.
\end{align*}

\begin{lem} Assume conditions $(C_1)$-$(C_7)$. Assume also that condition \eqref{sublin} holds for every $u,v\in C(I)$ and $\varphi_{2j}\in\LB(C(I))$, $j=1,\dots,m_2+n_2$, then
$H_2\in\Lip_0(C(I))$.
\end{lem}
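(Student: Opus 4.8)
The plan is to show directly that $H_2$ satisfies the Lipschitz property with $H_2(0)=0$, which by the definition of $\Lip_0(C(I))$ is exactly the assertion. The operator $H_2u(t)=\sum_{j=1}^{m_2+n_2}|\psi_{2j}(t)|\,|\varphi_{2j}[u]|$ is built from the functionals $\varphi_{2j}$ and the fixed functions $\psi_{2j}$, so the strategy is to transfer the linear-boundedness of each $\varphi_{2j}$ to a Lipschitz estimate on $H_2$. First I would check $H_2(0)=0$: since each $\varphi_{2j}\in\LB(C(I))$, we have $\varphi_{2j}[0]=0$ (linearly bounded operators vanish at zero, as noted in the paragraph preceding $\Lip_0$), hence $H_2(0)(t)=0$ for all $t$, so $H_2(0)=0$.

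Next I would estimate $\|H_2u-H_2v\|$ for arbitrary $u,v\in C(I)$. Fixing $t$ and using the triangle inequality termwise,
\begin{displaymath}
|H_2u(t)-H_2v(t)|\le\sum_{j=1}^{m_2+n_2}|\psi_{2j}(t)|\,\bigl|\,|\varphi_{2j}[u]|-|\varphi_{2j}[v]|\,\bigr|\le\sum_{j=1}^{m_2+n_2}|\psi_{2j}(t)|\,|\varphi_{2j}[u]-\varphi_{2j}[v]|.
\end{displaymath}
The key point is to control $|\varphi_{2j}[u]-\varphi_{2j}[v]|$. This is precisely where the hypothesis that \eqref{sublin} holds for all $u,v\in C(I)$ enters: by exactly the computation in Remark~\ref{remti} (the ``second triangle inequality''), \eqref{sublin} gives $|\varphi_{2j}[u]-\varphi_{2j}[v]|\le\varphi_{2j}[u-v]$ — but now valid on all of $C(I)$ rather than only on the cone $K$. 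Combined with $\varphi_{2j}\in\LB(C(I))$, this yields $|\varphi_{2j}[u]-\varphi_{2j}[v]|\le\varphi_{2j}[u-v]\le\|\varphi_{2j}\|\,\|u-v\|$.

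Substituting back and taking the supremum over $t\in I$ gives
\begin{displaymath}
\|H_2u-H_2v\|\le\Bigl(\sum_{j=1}^{m_2+n_2}\|\psi_{2j}\|\,\|\varphi_{2j}\|\Bigr)\|u-v\|,
\end{displaymath}
so $H_2\in\Lip(C(I))$ with $\|H_2\|^*\le\sum_j\|\psi_{2j}\|\,\|\varphi_{2j}\|<\infty$, the finiteness being guaranteed since $\psi_{2j}\in K\subset C(I)$ by $(C_6)$ and $\varphi_{2j}\in\LB(C(I))$. Together with $H_2(0)=0$ this places $H_2$ in $\Lip_0(C(I))=\Lip(C(I))\cap\LB(C(I))$, proving the claim.

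The main obstacle, and the reason the hypotheses are stated as they are, is that the useful inequality $|\varphi_{2j}[u]-\varphi_{2j}[v]|\le\varphi_{2j}[u-v]$ was originally derived in Remark~\ref{remti} only under the restriction $u-v,v\in K$; the genuine Lipschitz bound on all of $C(I)$ requires \eqref{sublin} to hold for every $u,v\in C(I)$, which is exactly the extra assumption added in the statement. I would make sure the Remark~\ref{remti} argument is invoked in the extended form, being careful that the step $\varphi_{2j}[u]=\varphi_{2j}[(u-v)+v]\le\varphi_{2j}[u-v]+\varphi_{2j}[v]$ only uses subadditivity and not positivity or membership in the cone.
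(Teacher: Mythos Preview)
Your proof is correct and follows essentially the same route as the paper: both use the extended ``second triangle inequality'' of Remark~\ref{remti} (applied on all of $C(I)$ thanks to the strengthened hypothesis on \eqref{sublin}) together with the linear boundedness of the $\varphi_{2j}$ to obtain the Lipschitz estimate $\|H_2\|^*\le\sum_j\|\psi_{2j}\|\,\|\varphi_{2j}\|$, and then note $H_2(0)=0$. If anything, your version is slightly more careful in handling the absolute values $|\varphi_{2j}[u]|$ appearing in the definition of $H_2$ via the reverse triangle inequality, a step the paper's write-up glosses over.
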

\begin{proof}Let $u,v\in C(I)$. Using inequality \eqref{sublin} and Remark \ref{remti} we obtain
\begin{align*}|H_2u-H_2v|=& \left|\sum_{j=1}^{m_2+n_2}|\psi_{2j}(t)|\varphi_{2j}[u]-\sum_{j=1}^{m_2+n_2}|\psi_{2j}(t)|\varphi_{2j}[v]\right|=\left|\sum_{j=1}^{m_2+n_2}|\psi_{2j}(t)|\(\varphi_{2j}[u]-\varphi_{2j}[v]\)\right|\\ \le & \sum_{j=1}^{m_2+n_2}\|\psi_{2j}\|\left|\varphi_{2j}[u]-\varphi_{2j}[v]\right|\le\sum_{j=1}^{m_2+n_2}\|\psi_{2j}\|\left|\varphi_{2j}[u-v]\right| \\ \le & \sum_{j=1}^{m_2+n_2}\|\psi_{2j}\|\|\varphi_{2j}\|\|u-v\|.\end{align*}
Hence, $H_2\in\Lip(C(I))$ and $\|H_2\|^*\le\sum_{j=1}^{m_2+n_2}\|\psi_{2j}\|\|\varphi_{2j}\|$. Also, since $H_2\in\LB(C(I))$, $H_2(0)=0$, so $H_2\in\Lip_0(C(I))$.
\end{proof}
We now recall the celebrated Krein-Rutman theorem.
\begin{thm}[Theorem 19.2 and Ex.~12 of~\cite{Deimling}] Let $X$ be a Banach space, $K\subset X$ a total cone, that is, $\ol{K-K}=X$, and $L:X\to X$ a continuous compact linear operator that maps $K$ to $K$ with positive spectral radius $r(L)$. Then $r(L)$ is an eigenvalue of $L$ with an eigenfunction in $K\backslash\{0\}$.
\end{thm}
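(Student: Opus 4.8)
The statement is the Krein--Rutman theorem, so the plan is to reprove it from the Riesz--Schauder spectral theory of compact operators together with the order structure. One passes to the complexification of $X$ whenever spectral notions (spectrum, resolvent, poles) are invoked, but the eigenvector will be produced over the reals, since the relevant limits are taken along the positive real axis. Write $r:=r(L)>0$. The first step is to set up the resolvent: for real $\lambda>r$ the Neumann series $R(\lambda)=(\lambda\,\Id-L)^{-1}=\sum_{n\ge0}\lambda^{-n-1}L^{n}$ converges in operator norm, and since $L(K)\subseteq K$ with $K$ a closed convex cone, every partial sum sends $K$ into $K$; hence $R(\lambda)(K)\subseteq K$ for each real $\lambda>r$, i.e. $R(\lambda)$ is a positive operator.

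The heart of the argument, and the step I expect to be the main obstacle, is to show that the \emph{real} number $r$ lies in $\sigma(L)$ (this is exactly what the positivity buys, since in general only some point of modulus $r$ need belong to the spectrum). Suppose, for contradiction, that $r\notin\sigma(L)$; then $R(\cdot)$ extends holomorphically across $\lambda=r$, so for every $\phi$ in the dual cone $K^{*}:=\{\phi\in X^{*}:\phi|_{K}\ge0\}$ and every $u_0\in K$ the scalar function $g_{\phi,u_0}(z)=\phi\bigl(R(1/z)u_0\bigr)=\sum_{n\ge0}z^{n+1}\phi(L^{n}u_0)$ is holomorphic near $z=1/r$. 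Its Taylor coefficients $\phi(L^{n}u_0)$ are nonnegative because $L^{n}u_0\in K$, so by Pringsheim's theorem the radius of convergence must strictly exceed $1/r$, forcing $\limsup_{n}\phi(L^{n}u_0)^{1/n}<r$ for \emph{all} such $\phi$ and $u_0$. It is precisely the totality hypothesis $\overline{K-K}=X$ that lets one recover $r=\limsup_{n}\|L^{n}\|^{1/n}$ from these positive pairings, yielding a contradiction; this comparison---transferring detection of the spectral radius from arbitrary vectors and functionals onto the cone and its dual---is the delicate technical point that must be handled carefully, since totality is weaker than the cone being generating.

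Once $r\in\sigma(L)$ is established, the remainder is a clean extraction argument. Because $L$ is compact and $r>0$, Riesz--Schauder theory makes $r$ an isolated eigenvalue and a pole of $R(\cdot)$ of some finite order $p\ge1$; write the Laurent expansion $R(\lambda)=\sum_{n\ge-p}(\lambda-r)^{n}A_{n}$ with $A_{-p}\ne0$. Inserting this into $(\lambda\,\Id-L)R(\lambda)=\Id$ and splitting $\lambda\,\Id-L=(\lambda-r)\Id+(r\,\Id-L)$, the coefficient of $(\lambda-r)^{-p}$ gives $(r\,\Id-L)A_{-p}=0$, so $\operatorname{ran}A_{-p}\subseteq\ker(r\,\Id-L)$. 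Moreover $A_{-p}=\lim_{\lambda\downarrow r}(\lambda-r)^{p}R(\lambda)$ is, along real $\lambda$, a limit of the nonnegative operators $R(\lambda)$ scaled by the positive factor $(\lambda-r)^{p}$; since $K$ is closed this yields $A_{-p}(K)\subseteq K$. Finally, as $A_{-p}\ne0$ and $\overline{K-K}=X$, the operator $A_{-p}$ cannot vanish identically on $K$ (otherwise it would vanish on the dense set $K-K$, hence everywhere); choosing $u_0\in K$ with $v:=A_{-p}u_0\ne0$ produces $v\in K\setminus\{0\}$ with $Lv=rv$, which is the asserted eigenfunction.
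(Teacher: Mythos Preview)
The paper does not supply a proof of this statement: it is the classical Krein--Rutman theorem, quoted from Deimling (Theorem~19.2 together with Exercise~12) and used only as a black box to obtain Corollary~4.8. So there is no ``paper's own proof'' to compare against.

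On the merits of your sketch: the second half --- once $r\in\sigma(L)$ is known, using the Laurent expansion of the resolvent at the pole $r$ to produce a nonzero $A_{-p}$ with range in $\ker(rI-L)$ and with $A_{-p}(K)\subseteq K$, then invoking totality to find $u_0\in K$ with $A_{-p}u_0\neq 0$ --- is clean and correct.

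The step you yourself flag as delicate, namely deducing $r\in\sigma(L)$ from Pringsheim plus totality, does not close as written. Pringsheim gives you, for each pair $(u_0,\phi)\in K\times K^*$, a radius of convergence strictly larger than $1/r$, hence $\limsup_n\phi(L^nu_0)^{1/n}<r$; but the excess depends on the pair. To contradict $\|L^n\|^{1/n}\to r$ you would need a \emph{uniform} bound, which amounts to controlling $\|v\|$ by $\sup\{\phi(v):\phi\in K^*,\ \|\phi\|\le 1\}$ for $v\in K$. That holds when $K$ is normal (so that $K^*$ is generating and essentially norming), but mere totality of $K$ does not give it. This is a genuine gap under the stated hypotheses, not just a detail to be filled in. Standard proofs for the total-cone case take a different route: one option is to analytically continue the positivity of $R(\lambda)$ to the left of $r$ via the Taylor expansion $R(\lambda)=\sum_{n\ge 0}(\lambda_0-\lambda)^nR(\lambda_0)^{n+1}$ (whose coefficients are positive operators) and then derive a contradiction at the first real spectral obstruction; another is to pass to the finite-dimensional Riesz subspace associated with the peripheral spectrum and use totality there together with a Perron--Frobenius/Brouwer argument. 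Either replaces the leap you are attempting from pairwise to uniform bounds.
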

\begin{cor}\label{coreigen}
The spectral radius of $L_1$ is an eigenvalue of $L_1$ with an eigenfunction in $P\cap K$.
\end{cor}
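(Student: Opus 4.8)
The plan is to apply the Krein--Rutman theorem stated above to the operator $L_1$ on the Banach space $X=C(I)$, using the cone $P$ of non-negative continuous functions, and then to upgrade the resulting eigenfunction from $P$ to $P\cap K$. I would choose $P$ rather than $K$ precisely because $P$ is manifestly a total cone: every $w\in C(I)$ decomposes as $w=w^+-w^-$ with $w^+,w^-\in P$, so $P-P=C(I)$ and a fortiori $\ol{P-P}=X$, whereas totality of $K$ is not obvious.

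Next I would check the remaining hypotheses. The map $L_1$ is linear, and $L_1(P)\subseteq P$ is immediate since $k^+\ge 0$ and $g\ge 0$ by $(C_3)$. Compactness and continuity of $L_1:C(I)\to C(I)$ follow from an Arzel\`a--Ascoli argument identical to the one used for $N_f$ in the first Lemma of Section~2: the kernel $k^+$ inherits the bound $k^+\le |k|\le\Phi$ from $(C_2)$ and the continuity estimate $|k^+(t,s)-k^+(\tau,s)|\le|k(t,s)-k(\tau,s)|$ from $(C_1)$, so the same dominated-convergence and equicontinuity estimates apply verbatim.

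The main obstacle is to establish that $r(L_1)>0$, since Krein--Rutman requires a strictly positive spectral radius. I would produce an explicit lower bound by iterating $L_1$ on the constant test function $u_0\equiv 1\in P$. For $t\in[a,b]$, condition $(C_2)$ gives $k(t,s)\ge c_1\Phi(s)\ge 0$, hence $k^+(t,s)=k(t,s)\ge c_1\Phi(s)$, so
\[
L_1u_0(t)=\int_a^b k^+(t,s)g(s)\,ds\ge c_1\int_a^b\Phi(s)g(s)\,ds=:c_1M_0,
\]
with $M_0=\int_a^b\Phi(s)g(s)\,ds>0$ by $(C_3)$. An easy induction then shows $L_1^n u_0(t)\ge (c_1M_0)^n$ for all $t\in[a,b]$: assuming the bound at step $n$ and using $k^+,g\ge 0$ together with $\int_a^b k^+(t,s)g(s)\,ds\ge c_1M_0$ on $[a,b]$, one gets $L_1^{n+1}u_0(t)\ge (c_1M_0)^n\int_a^b k^+(t,s)g(s)\,ds\ge (c_1M_0)^{n+1}$. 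Since $\|u_0\|=1$, this yields $\|L_1^n\|\ge\|L_1^n u_0\|\ge(c_1M_0)^n$, whence $r(L_1)=\lim_{n\to\infty}\|L_1^n\|^{1/n}\ge c_1M_0>0$.

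With all hypotheses verified, the Krein--Rutman theorem provides an eigenfunction $\phi\in P\setminus\{0\}$ with $L_1\phi=r(L_1)\phi$. To finish, I would upgrade the membership to $P\cap K$: by Remark~\ref{remPK}, $L_1$ maps $P$ into $K$, so $L_1\phi\in K$; since $r(L_1)>0$ and $K$ is a cone, $\phi=r(L_1)^{-1}L_1\phi\in K$. Thus $\phi\in P\cap K$ is the desired eigenfunction associated with the eigenvalue $r(L_1)$, which completes the proof.
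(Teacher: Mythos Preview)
Your proof is correct and follows the same overall strategy as the paper's: apply Krein--Rutman to $L_1$ on $C(I)$ with the total cone $P$, verify $r(L_1)>0$ by iterating on a test function, and then upgrade the resulting eigenfunction from $P$ to $P\cap K$ via $L_1(P)\subseteq K$ and the cone property of $K$.

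The only notable difference lies in how the positivity of the spectral radius is obtained. The paper builds a piecewise-linear bump function $u$ supported in a slightly larger interval $[\hat a,\hat b]\supset[a,b]$ so as to secure the \emph{global} comparison $L_1u\ge\lambda_0 u$ on all of $I$ before iterating. You instead take $u_0\equiv 1$ and exploit the fact that the integration in $L_1$ runs only over $[a,b]$, so a lower bound valid merely on $[a,b]$ already self-propagates under iteration; the sup norm is then bounded below by the values on $[a,b]$. Your route is a little more direct and avoids the auxiliary construction of $\hat a,\hat b$; the paper's route produces the slightly stronger intermediate inequality $L_1u\ge\lambda_0 u$ on $I$, which is not needed for the conclusion here.
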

\begin{proof}
Recall that $L_1$ is continuous, compact and maps $P$ to $P\cap K$ (see Remark \ref{remPK}).  Also, $P$ is a total cone. Let $u\in P$, $u\equiv 1$ in $[a,b]$. $L_1u(t)$ in $[0,1]$ does not depend on the values of $u$ in $[0,1]\backslash[a,b]$, and in particular  we have
\begin{displaymath}
L_1u(t)\equiv h(t)=\int_a^bk^+(t,s)g(s)ds\ge c\int_a^b\Phi(s)g(s)ds=:q,\ t\in [a,b].
\end{displaymath}
Assume $a>0$ and $b<1$ (in other cases it is straightforward). Since $h$ is a continuous function in $[0,1]$, there are some $\hat a,\hat b\in(0,1)$ such that $\hat a< a$ and $\hat b>b$ satisfying
\begin{displaymath}
h(t)>\frac{q}{2},\  t\in [\hat a,\hat b].
\end{displaymath}
Hence, defining
\[
u(t)=
\left\{
\begin{array}{lll}
0 & \mbox{if} & 0, \le t \le \hat a,\\
\displaystyle \frac{t-\hat a}{a-\hat a}, & \mbox{if} & \hat a \le t \le  a,\\
1, & \mbox{if} & a \le t \le b,\\
\displaystyle \frac{t
-\hat b}{b-\hat b}, & \mbox{if} & b \le t \le  \hat b,\\
0, & \mbox{if} & \hat b \le t \le 1,
\end{array}
\right.
\]
it can be verified that $u\in P$ and 
$L_1u\ge\l_0u$ on $[0,1]$ for  $\l_0=q/2$. Hence, by iteration, we have $L_1^nu\ge\l_0^nu$ for all $n\in\bN$ and therefeore $\|L_1^n\|\ge\|L_1^nu\|\ge\l_0^n\|u\|=\l_0^n$. Thus we have
\[r(L_1)=\lim_{n\to\infty}\|L_1^n\|^\frac{1}{n}\ge\l_0.\]

Therefore, the hypotheses of the Krein-Rutman Theorem are satisfied and, as consequence, there exists $\upsilon\in P$ such that $L_1\upsilon=r(L_1)\upsilon$. Since $L_1:P\to P\cap K$, we know that $\upsilon\in P\cap K$.
\end{proof}
In order to prove the next result, we use the following operator on $C[a,b]$ defined by
\begin{displaymath}
\bar{L} u(t):=  \int_a^b k^+(t,s)g(s)u(s)\,ds,\ t\in[a,b]
\end{displaymath}
and the cone $P_{[a,b]}$ of positive functions in $C[a,b]$. 
 
In the recent papers \cite{jw-lms, jw-tmna}, Webb developed an elegant theory valid for $u_0$-positive linear operators relative to two cones. It turns out that our operator  $\bar{L}$ fits within this setting and, in particular, satisfies the assumptions of Theorem $3.4$ of \cite{jw-tmna}.  We state here a special case of Theorem $3.4$ of \cite{jw-tmna} that can be used for $\bar{L}$.
 
\begin{thm}\label{thmjeff}
Suppose that there exist $u\in P_{[a,b]}\setminus \{0\}$ and $\lambda>0$ such that \begin{displaymath}\lambda u(t)\geq \bar{L}u(t),\ \text{for}\ t\in [a,b].\end{displaymath} Then we have 
$r(\bar{L})\leq \lambda$.

\end{thm}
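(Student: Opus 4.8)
The plan is to exploit that $\bar L$, thanks to $(C_2)$ and $(C_3)$, is not merely positive but enjoys a Harnack-type positivity that places it squarely in Webb's two-cone framework, so that the conclusion is an instance of Theorem~3.4 of~\cite{jw-tmna}. First I would record that $\bar L$ is a compact linear operator on $C[a,b]$ mapping $P_{[a,b]}$ into itself: the kernel $k^+(\cdot,\cdot)g$ is nonnegative, and compactness follows exactly as for $L_1$ (see Remark~\ref{remPK} and Corollary~\ref{coreigen}). The decisive observation, obtained precisely as in those results, is that for every $w\in P_{[a,b]}$ and all $t\in[a,b]$,
\begin{displaymath}
c_1\int_a^b\Phi(s)g(s)w(s)\,ds\le \bar L w(t)\le \int_a^b\Phi(s)g(s)w(s)\,ds,
\end{displaymath}
so that $\min_{t\in[a,b]}\bar L w(t)\ge c_1\|\bar L w\|$. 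Thus $\bar L$ is $u_0$-positive with $u_0\equiv 1$, which is exactly the setting in which Theorem~3.4 of~\cite{jw-tmna} applies and yields $r(\bar L)\le\lambda$.

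To make the argument self-contained I would argue as follows. If $r(\bar L)=0$ there is nothing to prove, since $\lambda>0$; so assume $r:=r(\bar L)>0$. The Krein--Rutman theorem, applied as in Corollary~\ref{coreigen}, provides $\phi\in P_{[a,b]}\setminus\{0\}$ with $\bar L\phi=r\phi$, and the displayed bound applied to $\phi=\tfrac1r\bar L\phi$ forces $\phi(t)\ge c_1\|\phi\|>0$ for all $t\in[a,b]$, so $\phi$ is bounded away from zero. Iterating the hypothesis $\lambda u\ge\bar Lu$ and using the monotonicity of the positive operator $\bar L$ gives $\lambda^{n}u\ge\bar L^{n}u$ for every $n$. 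Sandwiching $u$ and $\bar Lu$ between multiples of $\phi$ — namely $u\le\nu\phi$ with $\nu:=\|u\|/(c_1\|\phi\|)$, and $\bar Lu\ge\varepsilon\phi$ with $\varepsilon:=c_1\|\phi\|^{-1}\int_a^b\Phi g u$ — and applying $\bar L^{\,n-1}$, one obtains for every $n\ge 1$
\begin{displaymath}
\varepsilon\, r^{\,n-1}\phi=\varepsilon\,\bar L^{\,n-1}\phi\le \bar L^{\,n-1}(\bar Lu)=\bar L^{n}u\le \lambda^{n}u\le \nu\,\lambda^{n}\phi .
\end{displaymath}
Since $\phi>0$ on $[a,b]$, this yields $\varepsilon r^{\,n-1}\le\nu\lambda^{n}$ for all $n$, so $(r/\lambda)^{n}$ remains bounded and therefore $r\le\lambda$.

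The step I expect to be delicate is guaranteeing that the comparison constant $\varepsilon$ is strictly positive, i.e. that $\int_a^b\Phi g u>0$: this is the non-degeneracy that prevents the test function $u$ from being invisible to $\bar L$, and it is precisely the point that Webb's formulation in terms of $u_0$-positivity relative to two cones is designed to handle. When $\Phi g>0$ almost everywhere on $[a,b]$, as in the applications, every nonzero $u\in P_{[a,b]}$ satisfies $\int_a^b\Phi g u>0$ and the argument above closes; in the general case one instead appeals directly to Theorem~3.4 of~\cite{jw-tmna} through the $u_0$-positivity established in the first paragraph, which is the route the paper takes.
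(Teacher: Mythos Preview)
The paper does not prove Theorem~\ref{thmjeff}: it is stated there as a special case of Theorem~3.4 of~\cite{jw-tmna} and simply cited. Your first paragraph reproduces precisely this route --- verifying from $(C_2)$ that $c_1\int_a^b\Phi g w\le \bar Lw(t)\le\int_a^b\Phi g w$, hence $\min_{[a,b]}\bar Lw\ge c_1\|\bar Lw\|$, and then invoking Webb's $u_0$-positivity framework --- so on that count your approach coincides with the paper's.

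Your self-contained argument in the second paragraph is a genuine addition the paper does not supply, and it is correct. The Krein--Rutman eigenfunction $\phi$ is strictly positive on $[a,b]$ because $\phi=\tfrac1r\bar L\phi$ inherits the lower bound $c_1\|\phi\|$; the sandwiches $u\le\nu\phi$ and $\bar Lu\ge\varepsilon\phi$ with your choices of $\nu,\varepsilon$ are valid; and the iteration gives $(r/\lambda)^{n-1}\le\nu\lambda/\varepsilon$ for all $n$, forcing $r\le\lambda$. You are also right to isolate the one delicate point, the positivity of $\varepsilon=c_1\|\phi\|^{-1}\int_a^b\Phi g u$: if $\int_a^b\Phi g u=0$ then in fact $\bar Lu\le\int_a^b\Phi g u=0$, so $\bar Lu=0$ and the hypothesis $\lambda u\ge\bar Lu$ carries no information. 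Some non-degeneracy is therefore genuinely needed, and this is exactly what Webb's $u_0$-positivity relative to two cones encodes, so your fallback to Theorem~3.4 of~\cite{jw-tmna} in the general case is the right move. The paper does not mention this subtlety, since it defers entirely to the citation.
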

\begin{thm}\label{thmindeig}
Assume conditions $(C_1)$ - $(C_6)$,  $\varphi_{2j}[u]\ge\varphi_{2j}[v]$ for every $u,v\in K$ such that $u(t)\ge v(t)$ for all $t\in I$ and $\varphi_{ij}[u]\ge0$ for every $u\in P$ (part of $(C_{10})$. We have the following.
\begin{enumerate}
\item  If $H_2\in\Lip_0(C(I))$, $\|H_2\|^*<1$, $(\Id-H_2)^{-1}L_2\in\LB(C(I))$, $(\Id-H_2)^{-1}:K\cap P\to K\cap P$ is order preserving, $(\Id-H_2)^{-1}(\l u)\le\l (\Id-H_2)^{-1}u$ for every $\l\in\bR^+$, $u\in K\cap P$ and  
 $0\le f_2^{0}<\mu((\Id-H_2)^{-1}L_2)$, 
 then there exists $\rho_{0}>0$ such
that
\begin{equation*}
i_{K}(T,K_{\rho})=1 \; \text{ for each } \; \rho\in (0,\rho_{0}].
\end{equation*}{}
\item If $\mu( L_1)<f_{1,0}\leq \infty$, then there exists
$\rho_{0}>0$ such that for each $\rho\in (0,\rho_{0}]$
\begin{equation*}
i_{K}(T,K_{\rho})=0.
\end{equation*}{}
\item If $\mu( L_1)<f_{1,\infty} \leq \infty$, then there
exists $R_{1}$ such that for each $R \geq R_{1}$
\begin{equation*}
i_{K}(T,K_{R})=0.
\end{equation*}{}
\end{enumerate}
\end{thm}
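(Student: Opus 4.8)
The three statements correspond to the three index tools in Lemma~\ref{lemind}, and I would treat them separately, reducing each to the nonexistence of a suitable equation on $\partial K_\rho$ (or $\partial K_R$).

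\textbf{Part (1).} The plan is to invoke part~(2) of Lemma~\ref{lemind}: I would produce $\rho_0>0$ so that for every $\rho\in(0,\rho_0]$ one has $Tu\neq\lambda u$ for all $u\in\partial K_\rho$ and all $\lambda\ge1$, forcing $i_K(T,K_\rho)=1$. Writing $B:=(\Id-H_2)^{-1}L_2$, the hypothesis $0\le f_2^0<\mu(B)=1/r(B)$ lets me fix $\varepsilon>0$ and $\rho_0>0$ with $f_2(t,u)\le(f_2^0+\varepsilon)|u|$ for a.e.\ $t\in I$ and $|u|\le\rho_0$, while still $(f_2^0+\varepsilon)\,r(B)<1$. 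Arguing by contradiction, assume $\lambda u=Tu$ with $\lambda\ge1$ and $\|u\|=\rho\le\rho_0$. As in the proof of Lemma~\ref{thmind1} I obtain $\lambda|u(t)|\le H_2u(t)+F_2u(t)$; since $|u|\ge u$ on $I$ and $|u|\in K$ (Remark~\ref{remF2}), the monotonicity of the $\varphi_{2j}$ assumed here gives $H_2u\le H_2|u|$, while the Carath\'eodory bound gives $F_2u(t)\le(f_2^0+\varepsilon)L_2|u|(t)$. Using $\lambda\ge1$ this produces the operator inequality $(\Id-H_2)|u|\le(f_2^0+\varepsilon)L_2|u|$.

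The next step is to apply $(\Id-H_2)^{-1}$. Since $\|H_2\|^*<1$ the inverse exists and, by hypothesis, is order preserving and sub-homogeneous on $K\cap P$; as $|u|\in K\cap P$ and $L_2$ maps $K\cap P$ into $K\cap P$ (by the computation in Remark~\ref{remPK}), I get $|u|\le(f_2^0+\varepsilon)B|u|$. Because $B$ is order preserving, sub-homogeneous and lies in $\LB(C(I))$, iterating yields $|u|\le(f_2^0+\varepsilon)^nB^n|u|$ for every $n$, whence $\rho\le(f_2^0+\varepsilon)^n\|B^n\|\,\rho$. Dividing by $\rho>0$, taking $n$-th roots and letting $n\to\infty$ gives $1\le(f_2^0+\varepsilon)\,r(B)<1$, a contradiction.

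\textbf{Parts (2) and (3).} These are the Krasnosel'ski\u\i--Krein--Rutman eigenfunction arguments at $0$ and at $\infty$, and I would run them in parallel using part~(1) of Lemma~\ref{lemind}. By Corollary~\ref{coreigen}, $r(L_1)>0$ is an eigenvalue of $L_1$ with eigenfunction $\upsilon\in P\cap K$, so $\upsilon>0$ on $[a,b]$ and $L_1\upsilon=r(L_1)\upsilon$. Taking $e=\upsilon$, I would show $u\neq Tu+\lambda\upsilon$ for all $u\in\partial K_\rho$ and $\lambda>0$, which forces $i_K(T,K_\rho)=0$. From $\mu(L_1)<f_{1,0}$ fix $\varepsilon>0$ with $(f_{1,0}-\varepsilon)r(L_1)>1$ and $\rho_0>0$ with $f_1(t,u)\ge(f_{1,0}-\varepsilon)u$ for $0<u\le\rho_0$, a.e.\ $t\in[a,b]$. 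If $u=Tu+\lambda\upsilon$ with $u\in\partial K_\rho$, $\rho\le\rho_0$, $\lambda\ge0$, then on $[a,b]$ we have $c\rho\le u\le\rho_0$ and, since $k=k^+$ on $[a,b]\times I$ by $(C_2)$, the lower bound in $(C_4)$ gives $u(t)\ge F_1u(t)+\lambda\upsilon(t)\ge(f_{1,0}-\varepsilon)L_1u(t)+\lambda\upsilon(t)$ after discarding the nonnegative $H_1$-term. Setting $\tau^*:=\sup\{\tau\ge0:\ u\ge\tau\upsilon\text{ on }[a,b]\}\in(0,\infty)$ and using that $L_1$ integrates only over $[a,b]$ together with $L_1\upsilon=r(L_1)\upsilon$, I obtain on $[a,b]$
\[
u(t)\ge(f_{1,0}-\varepsilon)L_1u(t)+\lambda\upsilon(t)\ge\big[(f_{1,0}-\varepsilon)r(L_1)\tau^*+\lambda\big]\upsilon(t),
\]
and $(f_{1,0}-\varepsilon)r(L_1)\tau^*+\lambda>\tau^*$ contradicts the maximality of $\tau^*$. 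For part~(3) I replace $f_{1,0}$ by $f_{1,\infty}$, pick $R^*$ with $f_1(t,u)\ge(f_{1,\infty}-\varepsilon)u$ for $u\ge R^*$, and note that on $\partial K_R$ one has $u\ge cR$ on $[a,b]$, so the same argument applies once $R\ge R_1:=R^*/c$.

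The main obstacle is Part~(1): unlike the classical linear case, $H_2$ and $(\Id-H_2)^{-1}$ are only Lipschitz/linearly bounded, so the crux is to check that the scalar factor $(f_2^0+\varepsilon)$ survives each application of $B$ in the iteration. This is precisely where the sub-homogeneity $(\Id-H_2)^{-1}(\lambda u)\le\lambda(\Id-H_2)^{-1}u$, the order preservation on $K\cap P$, and the membership $B^n\in\LB(C(I))$ (so that $r(B)=\lim\|B^n\|^{1/n}$ governs the iterates) are all essential; once these are in force the contradiction with $(f_2^0+\varepsilon)r(B)<1$ is immediate. Parts~(2)--(3) are comparatively routine given Corollary~\ref{coreigen}.
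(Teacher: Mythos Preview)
Your Part~(1) is essentially the paper's argument: both obtain $|u|\le H_2|u|+(\xi-\tau)L_2|u|$ on $I$, invert $\Id-H_2$ via the order-preservation and sub-homogeneity hypotheses, iterate, and contradict the spectral radius of $(\Id-H_2)^{-1}L_2$.

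In Parts~(2)--(3) you take a genuinely different route. The paper splits according to the value of $\lambda$: for $\lambda>0$ it iterates $u\ge\mu(L_1)L_1u+\lambda\upsilon_1$ on $[a,b]$ to reach $u\ge n\lambda\upsilon_1$ for every $n$, which is impossible; for $\lambda=0$ it passes to the restricted operator $\bar L$ on $C[a,b]$ and invokes Webb's comparison result (Theorem~\ref{thmjeff}) to force $r(\bar L)\le 1/(\mu(L_1)+\varepsilon)$, contradicting $r(\bar L)\ge r(L_1)$. Your $\tau^*$--supremum argument handles both cases simultaneously: since $u\ge c\rho>0$ and $\upsilon\ge c\|\upsilon\|>0$ on $[a,b]$, one has $\tau^*\in(0,\infty)$ regardless of $\lambda$, and the strict inequality $(f_{1,0}-\varepsilon)r(L_1)>1$ alone gives $(f_{1,0}-\varepsilon)r(L_1)\tau^*+\lambda>\tau^*$, violating maximality. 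This is more economical---it avoids the auxiliary operator $\bar L$ and Theorem~\ref{thmjeff} altogether---whereas the paper's treatment has the side benefit of tying the argument to the $u_0$-positivity framework of~\cite{jw-tmna}. One small point: your opening line for~(2) says you will exclude only $\lambda>0$, but (as you then do) the case $\lambda=0$ must also be ruled out, since Lemma~\ref{lemind} presupposes $Tu\neq u$ on $\partial K_\rho$ for the index to be defined.
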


\begin{proof}

$(1)$
 Let $\xi=\mu((\Id-H_2)^{-1}L_2)$. By the hypothesis, there exist $\rho_0,\tau\in(0,1)$ such that
\begin{displaymath}f_2(t,u)\le(\xi-\tau)|u|\end{displaymath}
for all $u\in[-\rho_0,\rho_0]$ and almost every $t\in I$.\par

Let $\rho\in(0,\rho_0]$. We prove that $Tu\ne\lambda u$ for $u\in\partial K_\rho$ and $\lambda\ge 1$, which implies the result by Lemma \ref{lemind}. In fact, if we assume otherwise, then there exists $u\in\partial K_\rho$ and $\lambda\ge1$ such that $\l u=Tu$. Observe that if $u\in K$, using what is assumed of $(C_{10})$, we conclude that $|u|\in K\cap P$ and for $t\in I$,
\begin{align*}|u(t)|\le & \l |u(t)|=  |Tu(t)|  \le  H_2u(t)+\int_0^1|k(t,s)|g(s)f_2(s,u(s))ds\\ \le & H_2|u|(t)+ (\xi-\tau) L_2|u|(t).
 \end{align*}
 Now we have
 \begin{displaymath}|u|(t)\le(\Id-H_2)^{-1}(\xi-\tau)L_2|u|(t)\le(\xi-\tau)(\Id-H_2)^{-1}L_2|u|(t).\end{displaymath}
 Iterating, that is, substituting the LHS into the RHS, for $n\in\bN$, we obtain
 \begin{displaymath}|u|(t)\le\dots\le\left[(\xi-\tau)(\Id-H_2)^{-1}L_2\right]^n|u|(t).\end{displaymath}
 Therefore, taking norms, we have
  \begin{displaymath}\|u\|\le\|\left[(\xi-\tau)(\Id-H_2)^{-1}L_2\right]^n|u|\|,\end{displaymath}
  which implies
   \begin{displaymath}1\le\|\left[(\xi-\tau)(\Id-H_2)^{-1}L_2\right]^n\|,\end{displaymath}
   or
    \begin{displaymath}1\le(\xi-\tau)\|\left[(\Id-H_2)^{-1}L_2\right]^n\|^\frac{1}{n}.\end{displaymath}
Taking the limit both sides we arrive to a contradiction, \begin{displaymath}1\le\frac{\xi-\tau}{\xi}<1.\end{displaymath}
$(2)$ There exists $\rho_0>0$ such that $f_1(t,u)\ge\mu(L_1)u$ for all $u\in[0,\rho_0]$ and almost all $t\in [a,b]$. Let $\rho\in[0,\rho_0]$. Let us prove that
$u\ne Tu+\lambda\upsilon_1$ for all $u$ in $\partial K_\rho$ and $\lambda\ge0$,
where $\upsilon_1\in K$ is the eigenfunction of $L_1$ with $\|\upsilon_1\|=1$ corresponding to the eigenvalue $1/\mu(L_1)$, which would imply the result (cf. Corollary \ref{coreigen}).\par
We distinguish now two cases, $\l\in\bR^+$ and $\l=0$. Assume, on the contrary, that there exist $u\in\partial K_\rho$ and $\lambda\in\bR^+$ such that $u=Tu+\lambda\upsilon_1$. Since $Tu\ge0$ in $[a,b]$, we have that $u\ge\lambda\upsilon_1$ in $[a,b]$ and $L_1u\ge\lambda L_1\upsilon_1=[\lambda/\mu(L_1)]\upsilon
_1$ in $[a,b]$. Using this and the previous estimate for $f$ we have, by $(C_4)$ and $(C_6)$, in $[a,b]$,
\begin{align*}
Tu(t)= & Bu(t)+\int_0^1 k(t,s)g(s)f(s,u(s),Du(s))\,ds\ge \int_0^1 k(t,s)g(s)f_1(s,u(s))\,ds \\ \ge & \mu(L_1)\int_a^b k^+(t,s)g(s)u(s)\,ds=\mu(L_1)\, L_1u(t),
\end{align*}
so
\begin{displaymath}u\ge\mu(L_1) L_1u+\lambda\upsilon_1\ge\lambda \mu(L_1) L_1 \upsilon_1+\lambda\upsilon_1=2\lambda\upsilon_1,\ \text{in }[a,b].\end{displaymath}

Through induction we deduce that $\rho\ge u\ge n\lambda\upsilon_1$ in $[a,b]$ for every $n\in\bN$, a contradiction because $\upsilon_1\in K\backslash\{0\}$.\par
Now we consider the case $\lambda=0$.  Let $\varepsilon>0$ be such that for all $u\in[0,\rho_0]$ and almost every $t\in [a,b]$ 
we have
\begin{equation*}
f_1(t,u)\geq (\mu( L_1)+\varepsilon)u.
\end{equation*}
We have, for $t\in [a,b]$, 
\begin{equation*}
 u(t) \geq(\mu(L_1)+\varepsilon) L_1u(t).
\end{equation*}

Since $ L_1\upsilon_1(t)=r( L_1)\upsilon_1(t)$ for $t\in[0,1]$, we have, for $t\in[a,b]$,
\begin{displaymath}
\bar{L} \upsilon_1(t)=L_1\upsilon_1(t)=r(L_1)\upsilon_1(t),
\end{displaymath}
and we obtain $r(\bar{L})\geq r(L_1)$. On the other hand, we have, for $t\in [a,b]$, 
\begin{align*}
 u(t)= & Tu(t)= Bu(t)+\int_0^1 k(t,s)g(s)f(s,u(s),Du(s))\,ds\\ \ge & (\mu(L_1)+\varepsilon)\int_a^b k(t,s)g(s)u(s)\,ds = (\mu(L_1)+\varepsilon) L_1u(t)=(\mu(L_1)+\varepsilon) \bar{L} u(t).
\end{align*}
where $u(t)>0$ in $[a,b]$. Thus, using Theorem~\ref{thmjeff}, we have  $r(\bar{L})\leq 1/(\mu(L_1)+\varepsilon)$ and therefore $r(L_1)\leq 1/(\mu(L_1)+\varepsilon)$. This gives $\mu(L_1)+\varepsilon\leq \mu(L_1)$, a contradiction. \par
\par
$(3)$ Take $v_
1$ as in part (2). Let $R_1\in\bR^+$ such that $f_1(t,u)>\mu(L_1)u$ for all $u\ge c R_1$, $c$ as in $(C_4)$, and almost all $t\in [a,b]$. We will prove that
$u\ne Tu+\lambda\upsilon_1$ for all $u$ in $\partial K_R$ and $\lambda\in\bR^+$ when $R>R_1$. Observe that for $u\in\partial K_R$, we have $u(t)\ge c\|u\|\ge c R_1$ for all $t \in [a,b]$, so $f_1(t,u(t))>\mu(L_1)u(t)$ for a.e.  $t \in [a,b]$.
\par
Assume now, on the contrary, that there exist $u\in\partial K_R$ and $\l\in\bR^+$ (the proof in the case $\lambda=0$ is treated as in the proof of the statement~$(2)$) such that $u=Tu+\lambda\upsilon_1$. This implies $u\ge\lambda\upsilon_1$ in $[a,b]$ and $L_1u\ge\lambda L_1\upsilon_1=[\lambda/\mu(L_1)]\upsilon_1$ in $[a,b]$. Using this and the previous estimate for $f$ we have
\begin{displaymath}u\ge\mu(L_1) L_1u+\lambda\upsilon_1\ge\lambda \mu(L_1) L_1 \upsilon_1+\lambda\upsilon_1=2\lambda\upsilon_1,\ \text{in }[a,b].\end{displaymath}
Through induction  we deduce that $R\ge u\ge n\lambda\upsilon_1$ for every $n\in\bN$, a contradiction because $\upsilon_1\in K\backslash\{0\}$.
\end{proof}
\begin{rem}\label{remp1}
In the previous Theorem, in point $(1)$, it is enough to ask for $L_2\in\LB(C(I))$ in order to have $(\Id-H_2)^{-1}L_2\in\LB(C(I))$ since $(\Id-H_2)^{-1}\in\Lip(C(I))$.
\end{rem}
\begin{rem} It is clear that the spectral radius of a linearly bounded operator is bounded from above by the norm $\|\cdot\|$. Hence, in the previous Theorem, in point $(
1)$ the condition  $0\le f_2^{0}<\mu((\Id-H_2)^{-1}L_2)$ can be strengthened to $0\le f_2^{0}<1/\|(\Id-H_2)^{-1}L_2\|$. Furthermore, if $L_2\in\LB(C(I))$, we can strengthen it even further  to $0\le f_2^{0}<(1-\|H_2\|^*)/\|L_2\|$.
\end{rem}

\begin{rem}\label{remp2} In the previous Theorem, the conditions $\mu(L_1)<f_{1,0} \leq \infty$ and $\mu( L_1)<f_{1,\infty} \leq \infty$ in $(2)$ and $(3)$ respectively can be strengthen in order to avoid the computation of the spectral value of $L_1$. As it is shown in \cite{jwkleig}, the new conditions would be \[1/\inf_{t\in[a,b]}\int_a^bk(t,s)g(s)ds<f_{1,0} \leq \infty,\] and \[1/\inf_{t\in[a,b]}\int_a^bk(t,s)g(s)ds<f_{1,\infty} \leq \infty.\]

\end{rem}
\section{An application}
In order to prove the usefulness of our theory, we present a simple but yet fairly general application in this Section.
Consider the BVP
\begin{equation}\label{eqpull}-u''(t)=f(t,u(t))+\c(t)u(\eta(t)),\ t\in[0,1],\quad u(0)=u(1)=\theta\max_{t\in[a,b]} u(t).\end{equation}
where $f$ satisfies the $L^\infty$-Carath\'eodory conditions (see $(C_5)$), $\c\in C(I)$, $\c\ge0$, $\theta\in(0,1)$ and $\eta:I\to I$ is a measurable function such that for a fixed $[a,b]\subset (0,1)$ satisfies $\eta(I)\subset[a,b]$. Note that $u\circ\eta$ is in $L^\infty(I)$.
\par
We could consider more complex BCs or non-linearities, but for the sake of simplicity and insight we will keep it this way.
Observe that the BVP~\eqref{eqpull} is equivalent to
\begin{equation*}
%\label{eqpullint}
 u(t)= \int_0^1k(t,s)\left[f(s,u(s))+\c(s)u(\eta(s))\right]ds+\theta \max_{s\in[a,b]} u(s),\end{equation*}
where
\begin{displaymath}k(t,s):=\begin{cases} s(1-t), & 0\le s\le t\le 1,\\
t(1-s), & 0\le t\le s\le 1.\end{cases}\end{displaymath}
 Observe that $k$ is non-negative. Take $\Phi(s)=\sup_{t\in I} k(t,s)=s(1-s)$. By direct calculation we obtain
 \begin{displaymath}
 \til\Phi(s):=\inf_{t\in [a,b]}k(t,s)=\begin{cases} s(1-b), & 0\le s \le\frac{a}{1-(b-a)},\\a(1-s), & \frac{a}{1-(b-a)}\le s \le 1.\end{cases}
 \end{displaymath}
Thus, $\inf_{s\in I}\til\Phi(s)/\Phi(s)=\min\{a,1-b\}$, so we  take $c\le \min\{a,1-b\}$. We will look for solutions in the cone
\[K:=\{u\in C(I)\ :\ \min_{t\in[a,b]}u(t)\ge c\|u\|\}.\]
Observe that, for $u\in K$,
\begin{align*}f(t,u(t))+\c(t)u(\eta(t))\le & f(t,u(t))+\c(t)\max_{s\in[a,b]} u(s),\ t\in I,\\
f(t,u(t))+\c(t)\min_{t\in[a,b]}u(t)\le & f(t,u(t))+\c(t)u(\eta(t)),\ t\in[a,b].\end{align*}
Hence, take
\begin{align*}g\equiv & 1;\ f_i=f,\ m_i=n_i=1, i=1,2;\\ \a_{11}[u]= & \b_{11}[u]= \min_{t\in[a,b]}u(t),\\ \a_{21}[u]= & \b_{21}[u]= \max_{s\in[a,b]} u(s),\\ 
\delta_{11}(t)= & \delta_{21}(t)=\theta,\ \c_{11}(t)=\c_{21}(t)=\c(t).
\end{align*}
With these definitions we obtain
\begin{align*}
\varphi_1[u]= & \big(\min_{t\in[a,b]}u(t),\min_{t\in[a,b]}u(t)\big),\\
\varphi_2[u]= & (\max_{t\in[a,b]} u(t),\max_{t\in[a,b]} u(t)),\\
\psi_1[u] = & \psi_2[u]=  \left(\int_0^1|k(t,s)|\gamma(s)ds,\theta \right),\\
M_1= & \begin{pmatrix} m_1  & \theta \\ m_1 & \theta \end{pmatrix},\ r(M_1)=m_1+\theta,\\
M_2= & \begin{pmatrix}m_2 & \theta \\ m_2 & \theta \end{pmatrix},\ r(M_2)=m_2+\theta,\\
\cK_{\varphi_{11}}(s) & =\cK_{\varphi_{12}}(s)=\min\{a(1-s),s(1-b)\},\\
\cK_{\varphi_{21}}(s) & =\cK_{\varphi_{22}}(s)=\begin{cases}s(1-a), & 0\le s\le a,\\ s(1-s), & a\le s\le b,\\ b(1-s), & b\le s\le 1.\end{cases}
\end{align*}
where \[m_1=\min\limits_{t\in[a,b]}\int_0^1|k(t,s)|\gamma(s)ds\text{\quad and\quad} m_2=\max_{t\in[a,b]}\int_0^1|k(t,s)|\gamma(s)ds.\]
Observe that, with these definitions, conditions $(C_1)$--$(C_7)$, $(C_9)$ and $(C_{10})$ are satisfied. Assume also that $r(M_1)<1/\min\{a,1-b\}$ and $r(M_2)<1$. Then we have that $(C_8)$ is also satisfied.\par

If we rewrite the condition $(I_\rho^1)$ in terms of the choices we have made, we get
\[(\Id-M_2)^{-1}=\frac{1}{1-m_2-\theta}\begin{pmatrix}1-\theta & \theta \\ m_2 & 1-m_2 \end{pmatrix},\]
\[\int_0^1\cK_{\varphi_2}(s)g(s)ds=\left(-\frac{a^3}{6}+\frac{b^3}{6}-\frac{b^2}{2}+\frac{b}{2}\right)(1,1),\]
\[(\Id-M_2)^{-1}\int_0^1\cK_{\varphi_2}(s)g(s)ds=\frac{-\frac{a^3}{6}+\frac{b^3}{6}-\frac{b^2}{2}+\frac{b}{2}}{1-m_2-\theta}\left(1,1\right),\]
\[\sigma(t)=\frac{1}{2}t(1-t),\]
and condition $(I_\rho^1)$ becomes
\begin{displaymath}f^{-\rho,\rho}\sup_{t\in I}\(\frac{-\frac{a^3}{6}+\frac{b^3}{6}-\frac{b^2}{2}+\frac{b}{2}}{1-m_2-\theta}\left[\int_0^1|k(t,s)|\gamma(s)ds+\theta\right]+\frac{1}{2}t(1-t)\)<1.\end{displaymath}
Of course, a sufficient condition in order for $(I_\rho^1)$ to be satisfied, which is easier to check, is
\begin{displaymath}f^{-\rho,\rho}\(\frac{-\frac{a^3}{6}+\frac{b^3}{6}-\frac{b^2}{2}+\frac{b}{2}}{1-m_2-\theta}\left[\int_0^1s(1-s)\gamma(s)ds+\theta\right]+\frac{1}{8}\)<1.\end{displaymath}
If we rewrite the condition $(I_\rho^0)$ in terms of the choices we have made, we get

\[(\Id-c_1M_1)^{-1}=\frac{1}{1-c_1(m_1+\theta)}\begin{pmatrix}1-c_1\theta & c_1\theta \\ c_1m_1 & 1-c_1m_1 \end{pmatrix},\]
\[\int_0^1\cK_{\varphi_1}(s)g(s)ds=\left(\frac{a-a b}{2 a-2 b+2},\frac{a-a b}{2 a-2 b+2}\right),\]
\[(\Id-c_1 M_1)^{-1}\int_0^1\cK_{\varphi_1}(s)g(s)ds=\frac{1}{1-c_1(m_1+\theta)}\left(\frac{a-a b}{2 a-2 b+2},\frac{a-a b}{2 a-2 b+2}\right)\]
\[\int_a^bk(t,s)g(s)ds=
 \frac{1}{2} \left(a^2 (t-1)-t ((b-2) b+t)\right),\  a\le t\le b,\]

\[\inf_{t\in[a,b]}\int_a^bk(t,s)g(s)ds=\begin{cases}
 \frac{1}{2} a (a-b) (a+b-2), & a+b\leq 1, \\
 \frac{1}{2} (b-1) (a-b) (a+b), & \text{otherwise,} \end{cases}\]
 and condition $(I_\rho^0)$ becomes
\begin{multline}
  f_{1,\rho,\rho/c}\cdot \inf_{t\in [a,b]}\(\frac{1}{1-c_1(m_1+\theta)}\frac{a-a b}{2 a-2 b+2}\left[\int_0^1|k(t,s)|\gamma(s)ds+\theta\right]\right.\\\left.\phantom{\int} +\left(a^2 (t-1)-t ((b-2) b+t)\right)\)>1.
\end{multline}
A sufficient condition in order for $(I_\rho^0)$ to be satisfied is
 \begin{multline}
  f_{1,\rho,\rho/c}\(\frac{1}{1-c_1(m_1+\theta)}\frac{a-a b}{2 a-2b+2}\left[\int_0^1\min\{a(1-s),s(1-b)\}\gamma(s)ds +\theta\right] \right.\\ \left.+\inf_{t\in[a,b]}\left(a^2 (t-1)-t ((b-2) b+t)\right)\)>1.
\end{multline}

\begin{exa}
Let us now consider a particular case. Take $f(t,u)=tu^2$, $\gamma(t)=t(1-t)+\frac{1}{4}$, $\theta=1/2$ in the BVP~\eqref{eqpull}. Fix $\rho_1=1$, $\rho_2=28$, $a=1/4$, $b=3/4$. With these data, we have $c=1/4$, $f_2^{-\rho_1,\rho_1}=\rho_1=1$, $f_{1,\rho_2,\rho_2/c}=\rho_2/4=7$.\par
\[m_1=\frac{43}{1024}\approxeq 0.0419922,\ m_2=\frac{11}{192}\approxeq 0.0572917.\]
 Condition $(I_{\rho_1}^1)$ is
\begin{displaymath}f_2^{-\rho_1,\rho_1}\sup_{t\in I}\(\frac{31}{85}\left[\frac{1}{24} (t-1) t (2 (t-1) t-5)+\frac{1}{2}\right]+\frac{1}{2}t(1-t)\)<1.\end{displaymath}
where
\[\sup_{t\in I}\(\frac{31}{85}\left[\frac{1}{24} (t-1) t (2 (t-1) t-5)+\frac{1}{2}\right]+\frac{1}{2}t(1-t)\)=\frac{5357}{16320}\approxeq 0.328248\]
so condition $(I_{\rho_1}^1)$ is satisfied and condition $(I_{\rho_2}^0)$ becomes
 \[
  f_{1,\rho_2,\rho_2/c}\cdot \inf_{t\in [a,b]}\(\frac{256}{3541}\left[\frac{1}{24} (t-1) t (2 (t-1) t-5)+\frac{1}{2}\right]+\left(\frac{1}{16} (t-1)-t (t-\frac{15}{16})\right)\)>1,
 \]
 where
\[\inf_{t\in [a,b]}\(\frac{256}{3541}\left[\frac{1}{24} (t-1) t (2 (t-1) t-5)+\frac{1}{2}\right]+\left(\frac{1}{16} (t-1)-t (t-\frac{15}{16})\right)\)=\frac{4651}{28328}\approxeq 0.164184\]
so condition $(I_{\rho_2}^0)$ is satisfied.
\par Therefore $(S_2)$ in Theorem \ref{thmcasesS} is satisfied and the BVP~\eqref{eqpull} has at least a solution which is positive in $[1/4,3/4]$.
\end{exa}
We now apply Theorem \ref{thmindeig} to the BVP
\begin{equation}\label{eqpull2}-u''(t)+u(t)=f(t,u(t))+\theta \, u(\eta(t)),\ u(0)=u(1),u'(0)=u'(1),\end{equation}
where $\theta\in(0,1/2]$,  $f$ satisfies the $L^\infty$-Carath\'eodory conditions and $\eta:I\to I$ is a measurable function such that for a fixed $[a,b]\subset (0,1)$ satisfies $\eta(I)\subset[a,b]$. We rewrite sufficient conditions according to Remarks \ref{remp1}--\ref{remp2}, for the points $(1)-(3)$ to be satisfied. Firstly, problem \eqref{eqpull2} is equivalent to
\begin{equation*}
%\label{eqpullint}
 u(t)= \int_0^1k(t,s)\left[f(s,u(s))+\theta u(\eta(s))\right]ds,\end{equation*}
 where
\begin{displaymath}k(t,s)=\begin{cases}
 -\frac{e^{s-t+1}+e^{t-s}}{2-2 e}, & 0\le s\le t\le1, \\
-\frac{e^{s-t}+e^{-s+t+1}}{2-2 e}, &  0< t<s\le1.
\end{cases}\end{displaymath}

In this case, we have  that 
\begin{align*}
\varphi_1[u]= & (\min_{t\in[a,b]}u(t),0),\\
\varphi_2[u]= & (\max_{t\in[a,b]} u(t),0),\\
\psi_1[u] = & \psi_2[u]=  \left(\theta,0 \right).\end{align*}
Let us bound $\|L_2\|$ from above, that is
\begin{displaymath}L_2u(t)=\int_0^1|k(t,s)|u(s)ds\le \int_0^1|k(t,s)|ds\|u\|,\end{displaymath}
obtaining
\begin{displaymath}\|L_2\|\le\sup_{t\in[0,1]}\int_0^1|k(t,s)|ds=1.\end{displaymath}

In this case, $H_2u(t)= \theta\,\max_{s\in[a,b]} u(s)$. Note that $\|H_2\|^* \le \theta <1$ and $H_2(K\cap P)=[0,+\infty)\subset C(I)$ is a cone and therefore closed for the sum, which means, by Remark \ref{rangerem}, that $(\Id-H_2)^{-1}$ maps $K\cap P$ to itself. Furthermore, we have that, for $\theta\le 1 /2$, \[(\Id-H_2)^{-1}u(t)=u(t)+\frac{\theta}{1-\theta}\max_{s\in[a,b]} u(s),\ t\in[0,1],\]
which satisfies $(\Id-H_2)^{-1}u\le(\Id-H_2)^{-1}v$, $(\Id-H_2)^{-1}(\l u)\le\l(\Id-H_2)^{-1}u$ for every $u\le v$, $u,v\in P\cap K$, $\l\in\bR^+$.
Also we have $\|(\Id-H_2)^{-1}\|\le1/(1-\theta)$. On the other hand, we have
\begin{displaymath}
\inf_{t\in[a,b]}\int_a^b k(t,s)ds=\frac{e^{a-b+1}-e^{b-a}+1-e}{2-2 e}.
\end{displaymath}
With these values, we have
\begin{itemize}
\item[$(1)$] $0\le f_2^0<1-\theta$,
\item[$(2)$] $0\le \dfrac{2-2 e}{e^{a-b+1}-e^{b-a}+1-e}<f_{1,0}\le\infty,$
\item[$(3)$] $0\le \dfrac{2-2 e}{e^{a-b+1}-e^{b-a}+1-e}<f_{1,\infty}\le\infty.$
\end{itemize}
\begin{exa} Consider again $f(t,u)=tu^2$,  $a=1/4$, $b=3/4$; this time in BVP~\eqref{eqpull2}. We have that $f^0_2=f_{1,0}=0$ and $f_2^\infty=f_{1,\infty}=+\infty$. Hence, the conditions $(1)$ and $(3)$ in Theorem~\ref{thmindeig} are satisfied and therefore, by Lemma~\ref{lemind}, the BVP~\eqref{eqpull2} has at least a nontrivial solution.
\end{exa}

\section*{Acknowledgements}
A. Cabada and F. A. F. Tojo were partially supported by
Ministerio de Educaci\'on y Ciencia, Spain, and FEDER, Project
MTM2013-43014-P.  
F. A. F. Tojo was partially supported by Xunta de Galicia (Spain), project EM2014/032; FPU scholarship, Ministerio de Educaci\'on, Cultura y Deporte, Spain and a Fundaci\'on Barrie Scholarship. 
G. Infante was partially supported by G.N.A.M.P.A. - INdAM (Italy).
This paper was mostly written during a visit of F. A. F. Tojo to the Dipartimento di Matematica e Informatica of the Universit\`a della Calabria.  F. A. F. Tojo is grateful to the people of the aforementioned Dipartimento for their kind and warm hospitality.
The authors wish to acknowledge their gratitude to Dr Filomena Cianciaruso and Professor Paolamaria Pietramala, who helped improve this paper with their fruitful suggestions and comments.

\end{document}